\theoremstyle{plain}
\newtheorem{theorem}{Theorem}[section]
\newtheorem{lemma}[theorem]{Lemma}
\newtheorem*{question}{Question}
\newtheorem{proposition}{Proposition}[section]
\newtheorem{corollary}{Corollary}[section]
\newtheorem{definition}{Definition}[section]
\numberwithin{equation}{section}
\DeclareMathOperator*{\reg}{reg}
\newcommand{\loc}{\text{\rm{loc}}}
\newcommand{\eps}{\varepsilon}
\newcommand{\lb}{\label}
\newcommand{\beq}{\begin{equation}}
\newcommand{\eeq}{\end{equation}}
\newcommand{\bal}{\begin{align}}
\newcommand{\eal}{\end{align}}
\newcommand{\bals}{\begin{align*}}
\newcommand{\eals}{\end{align*}}
\newcommand{\bbR}{{\mathbb{R}}}
\begin{document}

\begin{frontmatter}
\title{McKean-Vlasov equations involving hitting times: blow-ups and global solvability}
\runtitle{McKean-Vlasov equations involving hitting times}

\begin{aug}
\author[A]{\fnms{Erhan} \snm{Bayraktar}\ead[label=e1]{erhan@umich.edu}},
\author[B]{\fnms{Gaoyue } \snm{Guo}\ead[label=e2]{gaoyue.guo@centralesupelec.fr}}
\author[C]{\fnms{Wenpin} \snm{Tang}\ead[label=e3]{wt2319@columbia.edu}}
\and
\author[D]{\fnms{Yuming Paul} \snm{Zhang}\ead[label=e4]{yzhangpaul@ucsd.edu}}
\address[A]{Department of Mathematics, University of Michigan,
\printead{e1}}

\address[B]{Laboratoire MICS, Universit\'e Paris-Saclay CentraleSup\'elec,
\printead{e2}}

\address[C]{Department of Industrial Engineering and Operations Research, Columbia University,
\printead{e3}}

\address[D]{Department of Mathematics, University of California, San Diego,
\printead{e4}}
\end{aug}

\begin{abstract}
This paper is concerned with the analysis of blow-ups for two McKean-Vlasov equations involving hitting times.
Let $(B(t); \, t \ge 0)$ be standard Brownian motion, and $\tau:= \inf\{t \ge 0: X(t) \le 0\}$ be the hitting time to zero of a given process $X$.
The first equation is $X(t) =X(0-) + B(t) - \alpha \mathbb{P}(\tau \le t)$.
We provide a simple condition on $\alpha$ and the distribution of $X(0-)$ such that the corresponding Fokker-Planck equation has no blow-up, and thus the McKean-Vlasov dynamics is well-defined for all time $t \ge 0$.
Our approach relies on a connection between the McKean-Vlasov equation and the supercooled Stefan problem, as well as several comparison principles.
The second equation is $X(t) =X(0-) + \beta t + B(t) + \alpha \ln \mathbb{P}(\tau >t)$, $t \ge 0$, whose Fokker-Planck equation is non-local.
We prove that for $\beta > 0$ sufficiently large and $\alpha$ no greater than a sufficiently small positive constant, there is no blow-up and the McKean-Vlasov dynamics is well-defined for all time $t \ge 0$.
The argument is based on a new transform, which removes the non-local term, followed by a relative entropy analysis.
\end{abstract}

\begin{keyword}[class=MSC]
\kwd[Primary ]{35K61}
\kwd{60H30}
\end{keyword}

\begin{keyword}
\kwd{Blow-ups}
\kwd{comparison principle}
\kwd{Fokker-Planck equations}
\kwd{generalized solution}
\kwd{entropy}
\kwd{hitting times}
\kwd{McKean-Vlasov equations}
\kwd{self-similar solution}
\kwd{Stefan problem}
\end{keyword}

\end{frontmatter}

\section{Introduction and main results}
Complex systems are central to the scientific modeling of real world phenomena. 
A challenge in mathematical modeling is to provide reasonably simple frameworks to capture the collective behaviors of individuals with intricate interactions. 
One famous example is the McKean-Vlasov equations, which were considered by Kac \cite{Kac57} in the context of statistical physics, and were further developed by McKean \cite{Mc66} to study weakly interacting particles.
The McKean-Vlasov equations have proved to be a powerful tool for modeling the mean field behavior of disordered systems, with applications including
the dynamics of granular media \cite{BC98, BGG13}, mathematical biology \cite{MC07, KS71}, economics and social networks \cite{CDL13, HMR06}, and deep neural networks \cite{MMN18, RV18}.
There have been a rich body of works on McKean-Vlasov equations, see \cite{CD1, CD2} for a detailed exposition.

In this paper, we are concerned with a class of generalized McKean-Vlasov equations which involve hitting times as boundary penalties.
These equations take the general form:
\begin{equation}
\label{eq:MVHG}
\displaystyle 
\left\{ \begin{array}{lcl}
X(t) = X(0-) + \beta t + B(t) + f(s(t)), \, t \ge 0, \\ 
\tau:= \inf\{t \ge 0: X(t) \le 0\}, \\
s(t): = \mathbb{P}(\tau \le t), 
\end{array}\right.
\end{equation}
where $X(0-)$ has a distribution supported on $(0, \infty)$, $\beta \in \mathbb{R}$ is the drift, 
$(B(t); \, t \ge 0)$ is standard Brownian motion, and $f: [0, 1) \to \mathbb{R}$ is a feedback function.
While our approaches may be used to study general cases,
we focus on the following two special scenarios.
\begin{enumerate}[itemsep = 3 pt]
\item
$\beta = 0$ and $f(x) = - \alpha x$, $\alpha > 0$: 
The equation \eqref{eq:MVHG} specializes to
\begin{equation}
\label{eq:MVHLin}
\displaystyle 
\left\{ \begin{array}{lcl}
X(t) =X(0-) + B(t) - \alpha \mathbb{P}(\tau \le t), \, t \ge 0, \\ 
\tau:= \inf\{t \ge 0: X(t) \le 0\}.
\end{array}\right.
\end{equation}
This model was originated in the study of the integrate-and-fire mechanism in neuroscience, from both probability aspects \cite{DI15, DI15b} and PDE perspectives \cite{CC11, CC13, CC15}.
It also arose as a toy model to study the mean field behavior of contagious financial networks \cite{HLS19}.
These works mainly dealt with the well-posedness of the McKean-Vlasov dynamics \eqref{eq:MVHLin}.
By letting $p(t, \cdot)$ be the sub-probability density of $X(t) 1_{\{\tau > t\}}$ and $N(t):= \partial_t \mathbb{P}(\tau \le t)$, 
the corresponding Fokker-Planck equation is 
\begin{equation}
\label{eq:MVHLinFP}
 \left\{ \begin{array}{lcl}
    p_t=\frac{1}{2}p_{xx}+\alpha N(t) p_x &&\text{ in }[0,T)\times (0,\infty),\\
    N(t)=\frac{1}{2}p_x(t,0),\quad p(t,0)=0 &&\text{ for }t\in [0,T),\\
    p(0,x)=p_{0}(x)&&\text{ for }x\in [0,\infty),
\end{array}\right.
\end{equation}
where $p_{0}(x)$ is the probability density of $X(0-)$.
As was shown in \cite{CC13}, the negative feedback $\alpha \le 0$ is classical, and there is a unique smooth solution to \eqref{eq:MVHLinFP} for all $t \ge 0$. 
The positive feedback $\alpha > 0$ is more subtle \cite{CC11, HLS19}: $s(t):=\mathbb{P}(\tau \le t)$ may not be absolutely continuous, and there may exist $T_{*} > 0$ such that $N(T_{*})= \infty$.
Such $T_{*}$ is called a blow-up, which is the main obstacle to analyze the Fokker-Planck equation \eqref{eq:MVHLinFP}, and study the well-posedness of the McKean-Vlasov dynamics \eqref{eq:MVHLin}.
To work around the blow-ups, \cite{DI15b} proposed the notion of a `physical solution' satisfying
$s(t) - s(t-) =\inf\{x \ge 0: \mathbb{P}(X(t-) \in (0,\alpha x) ) < x\}$ to the McKean-Vlasov dynamics \eqref{eq:MVHLin},
and proved the global existence by a particle system approximation.
A recent breakthrough \cite{DNS19} connected the Fokker-Planck equation \eqref{eq:MVHLinFP} to the supercooled Stefan problem, and as a byproduct the uniqueness of the physical solution is proved in the presence of blow-ups.
See also \cite{BS20, CSS20, LS18} for related developments.
\item
$f(x) = \alpha \ln (1-x)$, $\alpha > 0$: The equation \eqref{eq:MVHG} specializes to 
\begin{equation}
\label{eq:MVHLog}
\displaystyle 
\left\{ \begin{array}{lcl}
X(t) = X(0-) + \beta t + B(t) + \alpha \ln \mathbb{P}(\tau > t), \, t \ge 0, \\ 
\tau:= \inf\{t \ge 0: X(t) \le 0\}.
\end{array}\right.
\end{equation}
Similar to \cite{HLS19}, this model was proposed in \cite{NS19} to study the systemic risk of default financial networks. 
The nonlinearity of `$\ln$' comes from the assumption that after $k$ banks default at time $t$,
the value of each remaining bank is reduced by a factor of $\left(1 - \frac{k}{\# \mbox{\tiny banks at time } t-}\right)^{-\alpha}$.
Such a phenomenon is called a default cascade.
By letting $q(t, \cdot)$ be the sub-probability density of $X(t) 1_{\{\tau >t\}}$ and $\lambda(t): = \partial_t \ln \mathbb{P}(\tau > t)$, the corresponding Fokker-Planck equation is non-local:
\begin{equation}
\label{eq:MVHLogFP}
\displaystyle 
 \left\{ \begin{array}{lcl}
q_t=\frac{1}{2}q_{xx}-( \alpha \lambda(t)+ \beta)q_x &&\text{ in }[0,T)\times(0,\infty),\\
\lambda(t)= -\frac{1}{2}\frac{q_x(t,0)}{\int_0^\infty q(t,y)dy},\quad q(t,0)=0 &&\text{ on }[0,T),\\
q(0,x)=q_{0}(x)&&\text{ on }[0,\infty),
\end{array}\right.
\end{equation}
where $q_{0}(x)$ is the probability density of $X(0-)$.
Note that the equation \eqref{eq:MVHLogFP} is slightly different from that in \cite[Section 3]{NS19}, since we define $\lambda(t)$ as  $\partial_t \ln \mathbb{P}(\tau > t)$ instead of $\alpha \partial_t \ln \mathbb{P}(\tau > t)$.
There may also exist a blow-up $T_{*}$ such that $\|\lambda\|_{L^2[0,T_{*}]}:= \int_0^{T_{*}} \lambda^2(t) dt = \infty$.
It was proved in \cite{NS19} that a physical solution exists for all time $t$.
The uniqueness is still open, though it is believable that the arguments in \cite{DNS19} carry over to this setting.
\end{enumerate}

As we have already seen, the main difficulty in analyzing the McKean-Vlasov dynamics involving hitting times arises from the blow-ups.
Though \cite{DI15b, DNS19, NS19} proposed the physical solution to overcome this problem, it is still interesting to know whether there is a blow-up or not.
This is because the existence of a blow up implies a possible systemic risk event in a financial network. 
To be more precise, we ask the following question.
\begin{question}
Under what conditions on the distribution of $X_{0-}$ is there no blow-up in the Fokker-Planck equations \eqref{eq:MVHLinFP} and \eqref{eq:MVHLogFP} respectively?
\end{question}
If there is no blow-up, the physical solution coincides with a smooth solution possibly in some weak sense.
We simply say that the McKean-Vlasov dynamics is (well-)defined for all $t \ge 0$
if the corresponding Fokker-Planck equation does not exhibit blow-ups,
and is hence defined for all time in the classical sense.
In \cite{DI15, HLS19}, no blow-up conditions for the Fokker-Planck equation \eqref{eq:MVHLinFP} have been studied,
which assure that the McKean-Vlasov dynamics \eqref{eq:MVHLin} is defined for all time $t \ge 0$.
However, these conditions seem to be obscure, and are not easy to check.
No blow-up conditions for the Fokker-Planck equation \eqref{eq:MVHLogFP} have yet been explored, and it was conjectured in \cite{NS19} that there is no blow-up if $\alpha$ is sufficiently small. 

In this paper, we provide a simple criterion on the distribution of $X_{0-}$ under which the Fokker-Planck equation \eqref{eq:MVHLinFP} does not have any blow-up. 
We also study the problem of blow-ups for the Fokker-Planck equation \eqref{eq:MVHLogFP}, resolving the aforementioned conjecture.
To state the results, we need the following definition of weak and generalized solutions to \eqref{eq:MVHLinFP} and \eqref{eq:MVHLogFP}, respectively.
Below $L^1_{loc}([0,T))$ (resp. $L^2_{loc}([0,T))$) denotes the functions that are locally uniformly $L^1$ (resp. $L^2$) in $[0,T)$, 
and $L^\infty([0,T); L^1(\bbR^+))$ denotes 
\[
\left\{f: [0,T) \times \mathbb{R} \to \mathbb{R}^+ \,  \bigg| \,  f(t,\cdot)\in L^1(\bbR^+),\, \sup_{t\in [0,T)}   \|f(t,\cdot)\|_{L^1(\bbR^+)}<\infty\right\},
\]
and $W_2^{1,2}([0,T] \times [0,\infty))$ denotes the Sobolev space $L^2([0,T] \times [0,\infty))$
whose first weak derivative in time and the first two weak derivatives in space belong to $L^2([0,T] \times [0,\infty))$, equipped with the associated Sobolev norm.

\smallskip
\begin{definition}\label{def sol}
~
\begin{enumerate}[itemsep = 3 pt]
\item 
A pair of functions $(p,N)$ is a weak solution to the Fokker-Planck equation \eqref{eq:MVHLinFP} in the time interval $[0,T)$ if 
\[
p\in 
L^\infty([0,T); L^1(\bbR^+)), \quad N\in L^1_{loc}([0,T)),
\]
$p,N$ are non-negative,
and for any test function $\phi(t,x)\in C^\infty([0,T']\times [0,\infty))$ with $T'<T$ such that $\phi,\phi_t,\phi_x,\phi_{xx}\in L^\infty([0,T']\times [0,\infty))$, we have
\begin{align*}
    \int_0^{T'}\int_0^\infty & p(t,x)\left[-\phi_t(t,x)+\alpha N(t)\phi_x(t,x)-\frac{1}{2}\phi_{xx}(t,x)\right]dxdt\\
    =&-\int_0^{T'} N(t)\phi(t,0)dt+\int_0^\infty p_{0}(x)\phi(0,x)dx-\int_0^\infty p(T',x)\phi(T',x)dx.
\end{align*}
\item \cite{LSU68, NS19}
A pair of functions $(q,\lambda)$ is a generalized solution to the Fokker-Planck equation \eqref{eq:MVHLogFP} in the time interval $[0,T)$ if 
\[
 \lambda \in L^2_{loc}([0,T)),
\]
and for $T' < T$, the unique solution to the equation
\[
q_t=\frac{1}{2}q_{xx}-(\alpha \lambda(t)+\beta)q_x, \quad q(t,0)=0, \quad q(0,x)=q_{0}(x),
\]
in $W_2^{1,2}([0, T'] \times [0, \infty))$ satisfies $\lambda(t)=-\frac{1}{2}\frac{q_x(t,0)}{\int_0^\infty q(t,y)dy}$ for almost every $t \in [0, T']$.
\end{enumerate}
\end{definition}

In the above definition, a generalized solution to the equation \eqref{eq:MVHLogFP} is more restrictive than a weak solution to the equation \eqref{eq:MVHLinFP}, and it requires the uniqueness along with the existence.
This complication is due to the non-local term `$\int_0^\infty q(t,y)dy$' in the Fokker-Planck equation \eqref{eq:MVHLogFP}.
Our first result provides sufficient conditions for the Fokker-Planck equations \eqref{eq:MVHLinFP} and \eqref{eq:MVHLogFP} to exhibit blow-ups, which is in a similar spirit to \cite[Theorem 2.2]{CC11}.

\smallskip
\begin{proposition}\lb{T.1.2}
~
\begin{enumerate}[itemsep = 3 pt]
\item
If there exists $\mu>0$ such that 
\beq\lb{1.10}
\mu\alpha\int_0^\infty e^{-\mu x}p_0(x)dx\geq 1,
\eeq
then there is no weak solution to the Fokker-Planck equation \eqref{eq:MVHLinFP} for all time $t \ge 0$.
In this case, the solution can only exist before time
\begin{equation}
\label{eq:Tbu1}
T:=\frac{2}{\mu^2}\ln \frac{\int_0^\infty p_0(x)dx}{\int_0^\infty e^{-\mu x}p_0(x)dx}.
\end{equation}
In particular, if $\alpha > \min_{\mu > 0} \{(\mu \int_0^\infty e^{-\mu x}p_0(x)dx)^{-1},2\int_0^\infty xp_0(x)dx \}$,
then there is no weak solution to the Fokker-Planck equation \eqref{eq:MVHLinFP} for all time $t \ge 0$.
\item
If there exists a positive number $\mu>2\beta$ such that 
\beq\lb{4.3}
(1+\alpha\mu)\int_0^\infty e^{-\mu x}q_0(x)dx\geq \int_0^\infty q_0(x)dx,
\eeq
then there is no generalized solution to the Fokker-Planck equation \eqref{eq:MVHLogFP} for all time $t \ge 0$.
In this case, the solution can only exist before time 
\begin{equation}
\label{eq:Tbu2}
T: =\frac{2}{\mu(\mu-2\beta)}\ln\left(\frac{\int_0^\infty q_0(x)dx}{\int_0^\infty e^{-\mu x}q_0(x)dx}\right).
\end{equation}
\end{enumerate}
\end{proposition}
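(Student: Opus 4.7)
The plan is to test both Fokker-Planck equations against the exponential weight $\phi(x)=e^{-\mu x}$, which reduces each statement to a scalar differential inequality. Since $\phi\in C^\infty$ with $\phi_{xx}=\mu^2\phi$ and $x\phi_x=-\mu xe^{-\mu x}\in L^\infty$, $\phi$ is an admissible test function in Definition~\ref{def sol}(1); for the $W_2^{1,2}$ solution in Definition~\ref{def sol}(2) the corresponding integrations by parts are classical.

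For part (1), set $M(t):=\int_0^\infty e^{-\mu x}p(t,x)\,dx$. Two integrations by parts, together with $p(t,0)=0$ and $N(t)=\tfrac12 p_x(t,0)$, yield
\[
\frac{d}{dt}M(t)=\tfrac{\mu^2}{2}M(t)+N(t)\bigl(\alpha\mu M(t)-1\bigr).
\]
Condition \eqref{1.10} is exactly $\alpha\mu M(0)\ge1$. At any instant where $\alpha\mu M=1$ the right-hand side is bounded below by $\tfrac{\mu^2}{2}M>0$, so a first-exit-time/continuity argument propagates $\alpha\mu M(t)\ge1$ throughout the existence interval, and therefore $M(t)\ge M(0)\,e^{\mu^2 t/2}$. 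Since total mass is non-increasing, $M(t)\le\int_0^\infty p(t,x)\,dx\le\int_0^\infty p_0(x)\,dx$; combining the two bounds forces $t\le T$, which proves the blow-up.

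For part (2), set $M(t):=\int_0^\infty e^{-\mu x}q(t,x)\,dx$ and $Q(t):=\int_0^\infty q(t,x)\,dx$. Using $q(t,0)=0$ and $\lambda(t)Q(t)=-\tfrac12 q_x(t,0)$ (which also gives $\lambda\le0$), analogous integrations by parts produce
\[
\frac{d}{dt}M=\lambda(Q-\alpha\mu M)+\tfrac{\mu(\mu-2\beta)}{2}M,\qquad \frac{d}{dt}Q=\lambda Q.
\]
Form the auxiliary quantity $A(t):=(1+\alpha\mu)M(t)-Q(t)$; a direct calculation gives the linear equation
\[
\frac{d}{dt}A=-\alpha\mu\,\lambda(t)\,A+(1+\alpha\mu)\tfrac{\mu(\mu-2\beta)}{2}M(t).
\]
Since $\mu>2\beta$ makes the forcing non-negative and \eqref{4.3} reads $A(0)\ge0$, the integrating factor $\exp\!\bigl(\alpha\mu\!\int_0^t\!\lambda\,ds\bigr)$ yields $A(t)\ge0$, i.e., $Q(t)\le(1+\alpha\mu)M(t)$. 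Substituting back into the ODE for $r(t):=M(t)/Q(t)$,
\[
\frac{d}{dt}r=\lambda(t)\bigl(1-(1+\alpha\mu)r\bigr)+\tfrac{\mu(\mu-2\beta)}{2}r\ \ge\ \tfrac{\mu(\mu-2\beta)}{2}r,
\]
because $\lambda\le0$ and $1-(1+\alpha\mu)r\le0$. Gronwall then gives $r(t)\ge r(0)\,e^{\mu(\mu-2\beta)t/2}$. On the other hand $e^{-\mu x}\le1$ forces $r(t)\le1$, and the stated bound on $T$ follows.

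The hard part will be to justify these manipulations at the regularity level of the generalized solutions, where $N\in L^1_{loc}$ in part~(1) and $\lambda\in L^2_{loc}$ in part~(2), so the right-hand sides of the $M$-ODEs are only locally integrable. I would work with the integrated identities coming from the weak / $W_2^{1,2}$ formulations, deduce absolute continuity of $M$, $Q$ and $A$ in $t$, and carry out the sign-preservation step (``$\alpha\mu M\ge1$'' in part~(1), ``$A\ge0$'' in part~(2)) via a first-exit-time argument rather than pointwise differentiation.
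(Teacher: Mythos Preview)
Your argument is correct and essentially matches the paper's proof: both test against $e^{-\mu x}$, propagate the sign condition via a Gronwall/first-exit argument, and conclude exponential growth of the exponential moment against the mass bound. The only cosmetic difference is in part~(2): the paper first normalizes to $r=q/\bar q$ (so total mass is $1$) and works with $\int e^{-\mu x}r\,dx$, whereas you track $M$ and $Q$ separately and form the ratio $M/Q$ at the end --- but your scalar $r(t)=M(t)/Q(t)$ is exactly the paper's $\int e^{-\mu x}r(t,x)\,dx$, and the resulting differential inequality is identical.
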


In view of Lemma \ref{l.clascsol}, if we further assume
\beq\lb{c.3.3}
\limsup_{x\to 0+}p_0(x)<\frac{1}{\alpha}\quad\text{and}\quad\lim_{x\to\infty } p_0(x)=0,
\eeq
then the solution blows up ($N(t)\to\infty$) before time $T$ with $T$ defined by \eqref{eq:Tbu1}.
By Lemma \ref{T.5.1}, if we assume that $q_0(\cdot)\in W_2^1([0,\infty))$ and $q_0(0)=0$, then there exists a time $t_{\reg}\in (0,T]$ with $T$ defined by \eqref{eq:Tbu2} such that $\lim_{t\uparrow t_{\reg}}\|\lambda\|_{L^2[0,t]}=\infty$.
We emphasize that the physical solution allows the presence of blow ups, and Proposition \ref{T.1.2} then implies that the first blow up must occur before time $T$ in each Fokker-Planck equation.

The next theorem, which is our main result, gives a simple condition under which there is no blow-up for the Fokker-Planck equation \eqref{eq:MVHLinFP}.
Thus, a weak solution to \eqref{eq:MVHLinFP}, which is proved to be a classical one, 
is defined for all time $t \ge 0$.
Consequently, the McKean-Vlasov dynamics \eqref{eq:MVHLin} is defined for all time $t \ge 0$.
\begin{theorem}\lb{T.4.1}
Let $p_0$ be a probability density supported on $(0,\infty)$ such that 
\eqref{c.3.3} holds
and 
\beq\lb{c.3.3'}
\int_0^x (1-\alpha p_0(y))dy>0, \quad \mbox{for all } x \in (0,\infty).
\eeq
Assume that the weak solution $(p,N)$ to the Fokker-Planck equation \eqref{eq:MVHLinFP} 
with initial data $p_0$ exists for a short time. Then it exists for all time $t\geq 0$.
Moreover, $(p,N) $ satisfies the equation in the classical sense, and for all $t>0$, and for some $C>0$ depending only on $p_0$ we have
\[
N(t)\leq C\alpha^{-1}(1+\alpha^{-1}+(1+\alpha^\frac12)t^{-\frac{1}{2}}+(\ln t)^2)  
, \quad \mbox{for all } t>0.
\]
\end{theorem}

Note that under the assumption \eqref{c.3.3}, if we further assume the probability density $p_0$ to be piecewise continuous, then the weak solution $(p,N)$ exists for a short time. This can be done by the argument in \cite[Theorem 1.3]{classical8}.

Note that the condition \eqref{c.3.3'} is weaker than $\alpha <\|p_0\|_{\infty}^{-1}$. 
Indeed, we do not assume any $L^\infty$ bound on the initial data.
It was proved in \cite[Theorem 2.2]{LS20} that if $\alpha <\|p_0\|_{\infty}^{-1}$, the McKean-Vlasov dynamics \eqref{eq:MVHLin} is pathwise unique. 
Combining this result with Theorem \ref{T.4.1}, we get the following corollary.
\begin{corollary}
Assume that $\lim_{x \to \infty} p_0(x) = 0$ and $\alpha < \|p_0\|_{\infty}^{-1}$.
Then the McKean-Vlasov dynamics \eqref{eq:MVHLin} is defined for all time $t \ge 0$, and is pathwise unique.
\end{corollary}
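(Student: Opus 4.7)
The plan is to verify that the hypotheses of Theorem \ref{T.4.1} and of \cite[Theorem 2.2]{LS20} are both in force, and then simply combine the two.

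First, I would check that the bound $\alpha<\|p_0\|_\infty^{-1}$ is strictly stronger than the positivity condition \eqref{c.3.3'}. Indeed, under this bound one has $\alpha p_0(y)\le \alpha\|p_0\|_\infty<1$ for Lebesgue-a.e.\ $y>0$, so the integrand $1-\alpha p_0(y)$ is a.e.\ positive and bounded below by the constant $1-\alpha\|p_0\|_\infty>0$ on any set of positive measure. Hence $\int_0^x(1-\alpha p_0(y))\,dy\ge(1-\alpha\|p_0\|_\infty)\,x>0$ for every $x\in(0,\infty)$, and \eqref{c.3.3'} holds. The decay assumption \eqref{c.3.3} at $0$ and $\infty$ is assumed in the corollary, so all hypotheses of Theorem \ref{T.4.1} are met.

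Next, Theorem \ref{T.4.1} yields a generalized solution $(p,N)$ to the Fokker-Planck equation \eqref{eq:MVHLinFP} for all $t\ge 0$, together with the companion fact that the McKean-Vlasov dynamics \eqref{eq:MVHLin} is defined globally in time. This produces at least one distributional solution to \eqref{eq:MVHLin} on $[0,\infty)$ with initial law $p_0$.

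To upgrade existence to uniqueness in distribution, I would invoke \cite[Theorem 2.2]{LS20}, which asserts pathwise uniqueness of the McKean-Vlasov dynamics \eqref{eq:MVHLin} under the single assumption $\alpha<\|p_0\|_\infty^{-1}$. Pathwise uniqueness immediately entails uniqueness in distribution: any two solutions driven by the same Brownian motion from the same initial condition must coincide a.s., so in particular their marginals, and hence their feedback terms $\mathbb{P}(\tau\le t)$, agree. Combining the existence given by Theorem \ref{T.4.1} with the uniqueness from \cite[Theorem 2.2]{LS20} completes the argument.

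There is no real obstacle here; the content of the corollary is bookkeeping, with the only point that requires a line of verification being the implication $\alpha<\|p_0\|_\infty^{-1}\Rightarrow \eqref{c.3.3'}$, handled above.
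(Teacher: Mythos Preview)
Your proposal is correct and matches the paper's own argument essentially verbatim: the paper also observes that $\alpha<\|p_0\|_\infty^{-1}$ implies \eqref{c.3.3'}, invokes Theorem~\ref{T.4.1} for global existence, and then cites \cite[Theorem 2.2]{LS20} for pathwise uniqueness to conclude. Your write-up simply makes the implication $\alpha<\|p_0\|_\infty^{-1}\Rightarrow\eqref{c.3.3'}$ explicit, which the paper leaves as a one-line remark.
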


Furthermore, we consider the Fokker-Planck equation \eqref{eq:MVHLinFP} with initial data of form $\delta_{x_0}$ (delta mass).
Applying Proposition \ref{T.1.2} (1) and Theorem \ref{T.4.1} yields the following corollary. 
\begin{corollary}\lb{T.3.3}
Let $\alpha,x_0>0$.
Let $(p(\cdot,x; x_0),N(\cdot\,; x_0))$ be a weak solution to the Fokker-Planck equation \eqref{eq:MVHLinFP} with initial data $\delta_{x_0}$, and assume that $(p(\cdot,x; x_0),N(\cdot\,; x_0))$ exists for a small time. 
Then
\begin{itemize}[itemsep = 3 pt]
\item
if $ {\alpha}<x_0$, the solution $p(t, \cdot; x_0)$ exists for all time $t \ge 0$ and $N(t;x_0)<\infty$ for all $t>0$.
\item
if $\alpha>2x_0$, the solution cannot exist for all time. 
Moreover, there exists $T_{x_0} > 0$ such that $\limsup_{t\to T_{x_0}}p_x(t, \cdot; x_0)=\infty$.
\end{itemize}
\end{corollary}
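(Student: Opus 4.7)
The plan is to reduce each of the two cases to one of the preceding results by exploiting the parabolic smoothing of \eqref{eq:MVHLinFP}.

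For the sub-critical regime $\alpha<x_0$, the obstacle is that Theorem~\ref{T.4.1} requires an honest probability density satisfying \eqref{c.3.3}--\eqref{c.3.3'}, whereas $\delta_{x_0}$ is a measure. I would use the standing hypothesis that $(p(\cdot,\cdot\,;x_0),N(\cdot\,;x_0))$ exists on some short interval $[0,\varepsilon]$ and then restart the Fokker--Planck equation at time $\varepsilon$ with initial data $p(\varepsilon,\cdot\,;x_0)$. Because $N(t)\to 0$ as $t\to 0^{+}$ for delta-type data (the image-method heat kernel gives $p_x(t,0)\sim \tfrac{x_0}{t^{3/2}}e^{-x_0^2/(2t)}$), the feedback drift $\alpha N(t)p_x$ is a small perturbation, so $p(\varepsilon,\cdot\,;x_0)$ stays close to the linear profile
\[
G_\varepsilon(x):=\frac{1}{\sqrt{2\pi\varepsilon}}\Bigl(e^{-(x-x_0)^2/(2\varepsilon)}-e^{-(x+x_0)^2/(2\varepsilon)}\Bigr).
\]
Since $G_\varepsilon(0)=0$ and $G_\varepsilon(x)\to 0$ as $x\to\infty$, the perturbed profile also satisfies \eqref{c.3.3}. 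The crux is \eqref{c.3.3'}, which is equivalent to $x>\alpha\int_0^x p(\varepsilon,y\,;x_0)\,dy$ for every $x>0$. As $\varepsilon\to 0$ the cumulative integral of $G_\varepsilon$ tends pointwise to $\mathbf{1}_{\{x>x_0\}}$ with value $\tfrac12$ at $x=x_0$; a short case analysis separating $x\le x_0/2$, $x$ in a shrinking neighborhood of $x_0$, and $x\ge 2x_0$ uses $\alpha<x_0$ to produce a uniform strict lower bound. Applying Theorem~\ref{T.4.1} from time $\varepsilon$ yields global existence and $N(t)<\infty$ on $(\varepsilon,\infty)$; finiteness on $(0,\varepsilon]$ is built into the short-time existence hypothesis.

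For the super-critical regime $\alpha>2x_0$, the plan is simply to specialize the ``in particular'' clause of Proposition~\ref{T.1.2}(1) to $p_0=\delta_{x_0}$. One has $\int_0^\infty e^{-\mu x}p_0(x)\,dx=e^{-\mu x_0}$, so $\min_{\mu>0}(\mu e^{-\mu x_0})^{-1}=ex_0$ (attained at $\mu=1/x_0$), whereas $2\int_0^\infty xp_0(x)\,dx=2x_0$. Since $2x_0<ex_0$, the threshold in the proposition reduces to $2x_0$, and the hypothesis $\alpha>2x_0$ rules out a generalized solution for all time. Letting $T_{x_0}$ be the maximal existence time, the boundary identity $N(t)=\tfrac12 p_x(t,0\,;x_0)$ together with the failure of $N\in L^1_{\loc}([0,T_{x_0}])$ forces $\limsup_{t\to T_{x_0}}p_x(t,0\,;x_0)=\infty$, and in particular $p_x(t,\cdot\,;x_0)$ cannot remain bounded as $t\to T_{x_0}$.

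The main obstacle is the perturbation step in the sub-critical case: one must control $p(\varepsilon,\cdot\,;x_0)-G_\varepsilon$ well enough (say in $L^\infty_{\loc}$ together with its antiderivative) that the sharp inequality \eqref{c.3.3'} propagates from $G_\varepsilon$ to the true profile, uniformly in $x>0$, for some small $\varepsilon$. The super-critical half, by contrast, is essentially a one-line specialization of Proposition~\ref{T.1.2}(1).
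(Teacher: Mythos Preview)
Your overall strategy matches the paper's: part~1 via Theorem~\ref{T.4.1}, part~2 via Proposition~\ref{T.1.2}(1). The difference is in how directly each step is carried out.

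For part~1, the paper does not go through the perturbation argument you flag as the main obstacle. Instead it checks \eqref{c.3.3'} \emph{directly for the measure} $\delta_{x_0}$: the condition $\int_0^x(1-\alpha p_0(y))\,dy>0$ only involves the cumulative distribution, so for $p_0=\delta_{x_0}$ it reads $x-\alpha\mathbf{1}_{\{x>x_0\}}>0$, which is equivalent to $\alpha<x_0$. The point is that the smoothing step and the continuity-in-initial-data argument are already built into the proof of Theorem~\ref{T.4.1} itself (see the paragraph there beginning ``by starting at a small time $t=\eps$ instead of $t=0$''), so one does not need to re-establish \eqref{c.3.3'} for the smoothed profile $p(\varepsilon,\cdot;x_0)$. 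Your route is not wrong, but it creates work that the paper avoids by design.

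For part~2 you and the paper both invoke the $2\int xp_0\,dx=2x_0$ threshold in Proposition~\ref{T.1.2}(1). For the blow-up of $p_x$, the paper cites Lemma~\ref{l.clascsol} (which gives $\limsup_{t\to T_*}N(t)=\infty$ at the maximal time), whereas you argue through $\|N\|_{L^1[0,T_{x_0}]}=\infty$; either way one concludes $N(t)=\tfrac12 p_x(t,0)$ is unbounded near $T_{x_0}$, so the two arguments are equivalent.
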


Now we turn to the Fokker-Planck equation \eqref{eq:MVHLogFP}. 
Due to the non-local term, it seems to be difficult to get a simple criterion for no blow-up.
Nevertheless, we are able to show that for any initial data if $\beta > 0$ is sufficiently large, and $\alpha$ is no greater than a sufficiently small positive constant, then a generalized solution to \eqref{eq:MVHLogFP} and the McKean-Vlasov dynamics \eqref{eq:MVHLog} is well-defined for all time $t \ge 0$.
This confirms a conjecture in \cite[Remark 2.8]{NS19}.
Below $W_2^{1}([0,\infty))$ denotes the Sobolev space of $L^2([0,\infty))$ functions whose first weak derivative belongs to $L^2([0,\infty))$.

\begin{theorem}\lb{T.3.2}
Let $q_0(\cdot)\in W^1_2([0,\infty))$ with $q_0(0)=0$, and assume that $q_0^2(x)/x$ is integrable on $(0,1)$. 
There exists $C_0>0$ depending only on $q_0$ such that if $\beta\geq C_0$ and $\alpha\leq \frac{1}{C_0}$, 
then the generalized solution $(q,\lambda)$ to the Fokker-Planck equation \eqref{eq:MVHLogFP} with initial data $q_0$ 
exists for all time $t \ge 0$.
Moreover, for some $C>0$ depending only on $q_0$ we have
\begin{equation}
\label{eq:entbd}
\int_0^t |\lambda(s)|^2 ds \leq C(1+t), \quad \mbox{for all } t>0.
\end{equation}
\end{theorem}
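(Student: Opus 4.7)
My plan is to implement the two-step program advertised in the abstract. \textbf{Step 1 (Transform).} Put $M(t) := \int_0^\infty q(t,y)\,dy$. From \eqref{eq:MVHLogFP} and integration by parts, $M'(t) = -\tfrac12 q_x(t,0) = \lambda(t)M(t)$, so $M(t) = \mathbb{P}(\tau > t)$. Normalize by $\tilde q(t,x) := q(t,x)/M(t)$; then $\tilde q$ is a probability density on $[0,\infty)$ solving
\[
\tilde q_t = \tfrac12 \tilde q_{xx} - (\alpha\lambda + \beta)\tilde q_x - \lambda \tilde q, \qquad \tilde q(t,0) = 0, \qquad \lambda(t) = -\tfrac12 \tilde q_x(t,0).
\]
The decisive feature is that the non-local integral has been absorbed into the unknown: $\lambda$ is now purely a boundary functional of $\tilde q$.

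\textbf{Step 2 (Relative entropy and dissipation).} Take the $\beta$-adapted reference density $m(x) := 4\beta^2 x\,e^{-2\beta x}$ on $(0,\infty)$, which vanishes linearly at $0$, integrates to one, has $m'(0) = 4\beta^2$, and satisfies $(\log m)_x = x^{-1} - 2\beta$. Set
\[
H(t) := \int_0^\infty \tilde q(t,x) \log\bigl(\tilde q(t,x)/m(x)\bigr)\,dx \ge 0;
\]
the assumptions $q_0 \in W^1_2([0,\infty))$, $q_0(0)=0$, and $q_0^2/x \in L^1(0,1)$ guarantee $H(0) < \infty$. Differentiating in time, substituting the PDE for $\tilde q$ and integrating by parts produces an identity of the form
\[
\dot H(t) + \tfrac12 I(\tilde q) = \lambda\log\bigl(-\lambda/(2\beta^2)\bigr) + \lambda + \tfrac12 \int_0^\infty \tilde q /x^2\,dx + (\alpha\lambda+\beta)\Bigl(2\beta - \int_0^\infty \tilde q/x\,dx\Bigr) - \lambda H(t),
\]
where $I(\tilde q) := \int_0^\infty \tilde q_x^2/\tilde q\,dx$ is the Fisher information. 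Hardy's inequality applied to $\sqrt{\tilde q}$ yields $\int_0^\infty \tilde q/x^2\,dx \le I(\tilde q)$, absorbing the singular $1/x$ piece into the Fisher term; the log-boundary term is bounded by $\lambda\log(-\lambda/(2\beta^2)) \le 2\beta^2/e$ (one-variable calculus); and the $\alpha\lambda$ pieces of the drift factor are dominated by $c\lambda^2 + C$ once $\alpha$ is small.

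\textbf{Step 3 (Closure, conclusion, and main obstacle).} Taking $\beta$ large and $\alpha$ small (both thresholds depending only on $q_0$) should close the previous identity to a clean dissipation inequality $\dot H(t) + c\,\lambda(t)^2 \le C$; since $H \ge 0$, integration on $[0,t]$ then yields
\[
\int_0^t \lambda(s)^2\,ds \le c^{-1}\bigl(H(0) + Ct\bigr) \le C'(1+t),
\]
which is \eqref{eq:entbd}. This $L^2_{\mathrm{loc}}$ regularity of $\lambda$ is exactly what Definition \ref{def sol}(2) demands, so the local generalized solution extends globally, and with it the McKean-Vlasov dynamics \eqref{eq:MVHLog}. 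The genuinely delicate piece is the cross-term $-\lambda H = |\lambda|H \ge 0$ appearing in the displayed identity: a naive Young's inequality $|\lambda|H \le \epsilon\lambda^2 + \epsilon^{-1}H^2$ introduces an $H^2$ term and closes onto a Riccati inequality that can blow up in finite time. Circumventing this requires either refining $m$ so the bad contribution cancels structurally (e.g.\ letting $m$ be the stationary profile of a frozen-$\lambda$ approximate equation, absorbing the $-\lambda\tilde q$ source at the reference level) or augmenting $H$ with a compatible Lyapunov functional such as $\kappa\int_0^\infty x\,\tilde q\,dx$ whose evolution contains a matching $\lambda$-term. Calibrating this cancellation together with $\alpha \ll 1$ and $\beta \gg 1$ is the heart of the proof.
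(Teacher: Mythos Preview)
Your Step~1 matches the paper's transform \eqref{5.2} exactly. After that, what you have is a program, not a proof: in Step~3 you name the obstruction---the cross-term $-\lambda H=|\lambda|H\ge 0$---observe that Young's inequality only closes onto a Riccati-type bound that may blow up, and then list two possible remedies without carrying either out. As written, nothing prevents $H$ from blowing up in finite time, so \eqref{eq:entbd} is not established.

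The paper resolves the same difficulty with a different Lyapunov functional. Instead of the relative entropy $\int r\log(r/m)$ it uses a localized $\chi^2$-type energy
\[
I(t)=\int_0^\infty \frac{r(t,x)^2}{\omega(x)}\,\phi(x)\,dx,\qquad \omega(x)=2\kappa\,x\,e^{-\sqrt{2\kappa}\,x},\ \ \kappa\in(0,\tfrac18],
\]
with a fixed smooth cutoff $\phi$ supported in $[0,1)$ (Lemma~\ref{L.3.2}). Three choices make this close where the entropy stalls. First, the boundary contribution of the $L^2$ computation is exactly $-\lambda^2/(e\kappa)$, a clean \emph{quadratic} dissipation in $\lambda$, in place of the sub-quadratic $\lambda\log(-\lambda/(2\beta^2))$ produced by the entropy. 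Second, $\omega$ has decay rate $\sqrt{2\kappa}$ \emph{independent of $\beta$}; on $\mathrm{supp}\,\phi$ one has $\omega\le 2\omega_x$, so the drift term $-(\alpha\lambda+\beta+\sqrt{2\kappa})\int (r/\omega)^2\omega_x\phi$ delivers a genuine linear damping $-\tfrac12\beta\,I(t)$. Third---and this is precisely the fix for your flagged obstacle---the analogue of $|\lambda|H$ is $2|\lambda|I(t)$; Young's inequality turns it into $\tfrac{\lambda^2}{2e\kappa}+2e\kappa\,I(t)^2$, and the $I(t)^2$ is then absorbed by the $-\tfrac12\beta I(t)$ damping once $\beta$ is large. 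This is the ``compatible Lyapunov'' mechanism you were seeking, but it comes for free from the drift rather than from an auxiliary functional. The resulting differential inequality keeps $I$ bounded by a constant depending only on $q_0$, and integration yields \eqref{eq:entbd}.

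One further structural problem with your setup, independent of the closure gap: because your reference $m(x)=4\beta^2xe^{-2\beta x}$ depends on $\beta$, the initial value $H(0)$ contains a term $2\beta\int_0^\infty x\,\tilde q_0\,dx$ growing linearly in $\beta$, and the drift contribution $(\alpha\lambda+\beta)\cdot 2\beta$ in your displayed identity grows quadratically. The constants in \eqref{eq:entbd} could therefore not be made to depend only on $q_0$. The paper sidesteps this by keeping both the reference $\omega$ and the cutoff $\phi$ entirely $\beta$-free, so that $I(0)$ (and the hypothesis $q_0^2/x\in L^1(0,1)$ guaranteeing its finiteness) involve only $q_0$.
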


Note that \cite{NS19} only considers the case $\alpha > 0$, while Theorem \ref{T.3.2} extends to 
all $\alpha \le 0$ provided that $\beta>0$ is sufficiently large.
As a consequence of \eqref{eq:entbd}, there exists $C' > 0$ such that
$\mathbb{P}(\tau > t) \ge \exp(-C't)$ for all $t > 0$,
which gives a lower bound on the tail of the hitting time of the McKean-Vlasov dynamics \eqref{eq:MVHLog}.
The problem of the uniqueness is more subtle.
In our forthcoming paper \cite{BGTZ2}, we prove that for some initial distribution $q_0$, the McKean-Vlasov dynamics \eqref{eq:MVHLog} for large $\beta$ and small $\alpha$ is not unique in distribution.

The main idea to prove Theorem \ref{T.4.1} and Theorem \ref{T.3.2} consists of comparing the solution to  \eqref{eq:MVHLinFP} with the self-similar solution to the super-cooled Stefan problem, and comparing the solution to \eqref{eq:MVHLogFP} with the stationary solution to a transformed equation. 
In contrast with the fixed-point method used in \cite{DI15, HLS19, NS19}, we rely on comparison principles and relative entropy arguments which are of independent interest.

\bigskip
{\bf Organization of the paper}: 
In Section \ref{sc2}, we consider the weak solutions to the equation \eqref{eq:MVHLinFP}.
There we prove Theorem \ref{T.4.1}.
Section \ref{sc3} is devoted to the study of the equation \eqref{eq:MVHLogFP}, and Theorem \ref{T.3.2} is proved.

\section{Solutions to the Fokker-Planck equation \eqref{eq:MVHLinFP}}
\label{sc2}

In this section, we study the weak solutions to the Fokker-Planck equation \eqref{eq:MVHLinFP}. 
The idea of the proof is inspired from \cite[Theorem 4.1]{classical8} regarding the supercooled Stefan problem.
Using the transformation \eqref{2.2} below, it can be deduced from \cite{classical8} that if 
\[
\limsup_{x\to 0+}p_0(x)<\frac{1}{\alpha},
\]
the solution $p$ to the equation \eqref{eq:MVHLinFP} exists for a short time, and within the short time $N(t)<\infty$ for $t>0$. 
In comparison with \cite{classical8}, our approach is notably different:
(1) our existence and regularity results hold for all time;
(2) we consider an unbounded domain. 
To achieve these, technically, we need more delicate comparison principles (Lemma \ref{L.2.1} and Lemma \ref{T.3.1}).

Section \ref{sc21} presents preliminaries on the super-cooled Stefan problem, and its self-similar solution. 
In Section \ref{sc22}, we provide key comparison lemmas which will be used in the proof of Theorem \ref{T.4.1}.
Theorem \ref{T.4.1} and Corollary \ref{T.3.3} will be proved in Section \ref{sc23}.

\subsection{Preliminaries}
\label{sc21}
We also need the notion of classical solutions to the equation \eqref{eq:MVHLinFP}.  
To this end, we assume that the initial data $p_0$ satsifies
\beq\lb{1.2}
\begin{aligned}
    &p_0\in C^1(\bbR^+)\cap C([0,\infty))\cap L^1(\bbR^+),\\
    & p_0(0)=\lim_{x\to\infty}p_0(x)=\lim_{x\to\infty}\partial_xp_0(x)=0.
\end{aligned}
\eeq
\begin{definition}
A pair of functions $(p,N)$ is a classical solution to the Fokker-Planck equation \eqref{eq:MVHLinFP} in the time interval $[0,T)$ for a given $T \in (0, \infty]$ and with initial data $p_0$ satisfying \eqref{1.2}, if the following conditions are satisfied:
\begin{enumerate}[itemsep = 3 pt]
    \item $N(t)$ is a continuous function for all $t\in [0,T)$,   
    \item $p$ is continuous in $[0,T)\times[0,\infty)$, $p\in C^1_tC^2_x((0,T)\times \bbR^+)$, and for $t\in (0,T)$, $p(t,0^+)$ is well-defined and $p,p_x(t,x)\to 0$ as $x\to\infty$,
        \item The equation \eqref{eq:MVHLinFP} is satisfied in the classical sense.  
\end{enumerate}
\end{definition}

The following result is a simple variant of \cite[Theorem 3.1, Theorem 4.2]{CC13}.
\begin{lemma}\lb{T.1.1}{\rm (\cite{CC13})}
Let $p_0$ satisfy \eqref{1.2}. Then there exists a unique classical solution to the Fokker-Planck equation \eqref{eq:MVHLinFP} in the time interval $[0,T_*)$ for some $T_*>0$. 
The maximal time of existence $T_*>0$ is characterized as
\[T_*=\sup\{t>0: N(t)<\infty\}.\]
\end{lemma}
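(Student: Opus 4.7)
The plan is to reduce \eqref{eq:MVHLinFP} to a linear parabolic problem with a prescribed moving boundary and then close the loop via a contraction mapping argument in the variable $N(\cdot)$. Given a candidate $N\in C([0,T])$, the linear equation $p_t=\tfrac12 p_{xx}+\alpha N(t)p_x$ with $p(t,0)=0$ and $p(0,\cdot)=p_0$ is transformed by the substitution $y:=x+\alpha\int_0^t N(s)\,ds$, $\tilde p(t,y):=p(t,y-\alpha\int_0^tN(s)\,ds)$, which removes the drift and yields the heat equation $\tilde p_t=\tfrac12\tilde p_{yy}$ on the moving domain $\{y>b(t)\}$ with $b(t):=\alpha\int_0^tN(s)\,ds$ and $\tilde p(t,b(t))=0$. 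This is a linear Stefan-type problem with a \emph{prescribed} boundary, for which the classical heat-potential representation (single- and double-layer potentials on the curve $y=b(t)$, in the spirit of Friedman) delivers a closed-form expression for the boundary flux and hence for $\tfrac12 p_x(t,0)$ in terms of $N$ and $p_0$.

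For local existence, fix $M>\tfrac12 p_0'(0^+)+1$ and consider the closed set $\calB_{M,T}:=\{N\in C([0,T]): 0\le N\le M\}$. Define the map $\Phi(N)(t):=\tfrac12 p_x(t,0)$, where $p$ is the classical solution of the linear problem driven by $N$. Using the potential representation together with the regularity in \eqref{1.2} --- especially $p_0(0)=0$, which rules out a boundary layer at $t=0^+$ --- one checks $\Phi(N)\in C([0,T])$ with $\Phi(N)(0)=\tfrac12 p_0'(0^+)$, and obtains an estimate $\|\Phi(N_1)-\Phi(N_2)\|_{C([0,T])}\le C T^{1/2}\|N_1-N_2\|_{C([0,T])}$ for some $C=C(p_0,M)$. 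For $T$ sufficiently small, $\Phi$ is then a self-map and contraction of $\calB_{M,T}$, so Banach's theorem supplies a fixed point $N$. The corresponding $p$ is the desired classical solution on $[0,T]$; the interior regularity $p\in C^{1,2}_{t,x}$ and the decay $p,p_x\to 0$ as $x\to\infty$ follow from standard Schauder estimates and the decay assumptions on $p_0$ in \eqref{1.2}.

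For the characterization of the maximal existence time and uniqueness, apply the local result with initial data $p(t_0,\cdot)$ at any $t_0<T_*$ at which the solution is already defined; the hypotheses analogous to \eqref{1.2} at $t_0$ follow from parabolic regularity for $t>0$. This shows the solution can be extended whenever $N$ stays bounded, and iteration yields existence up to $T_*:=\sup\{t>0:N(t)<\infty\}$; that $T_*$ is also the maximal existence time of the classical solution is immediate since existence at time $t$ forces $N(t)=\tfrac12 p_x(t,0)<\infty$ by definition. For uniqueness on $[0,T]\subset[0,T_*)$, two solutions $(p_i,N_i)$ yield a difference $w:=p_1-p_2$ satisfying a linear parabolic equation with source $\alpha(N_1-N_2)\partial_x p_2$; the same heat-potential machinery controls $w_x(t,0)=N_1(t)-N_2(t)$ by an integral of $|N_1-N_2|$, and Gronwall's inequality forces $N_1\equiv N_2$, hence $w\equiv 0$.

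The main technical obstacle is the quantitative boundary-flux estimate showing that $p_x(t,0)$ depends continuously --- with H\"older-$\tfrac12$ modulus in $t$ and Lipschitz in $N$ --- on the data driving the linear problem. This is precisely the point at which \eqref{1.2} is used in a nontrivial way: the compatibility condition $p_0(0)=0$ ensures $p_x(t,0)$ is continuous up to $t=0$, while the decay of $p_0$ and $\partial_x p_0$ at infinity controls the tails of the potential representation and justifies the boundary computation. Once this quantitative estimate is in hand, the rest of the argument is a routine Banach fixed-point plus extension scheme.
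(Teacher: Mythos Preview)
The paper does not supply a proof of this lemma: it cites \cite[Theorem~3.1]{CC13} and states that the result is ``a simple variant,'' with the proof omitted. Your proposal therefore cannot be compared against anything in the paper, but it does reproduce the standard route behind the cited reference: freeze $N\in C([0,T])$, solve the resulting linear drift--diffusion problem (via the substitution $y=x+\alpha\int_0^t N(s)\,ds$ that reduces it to the heat equation on a moving half-line), read off the boundary flux by heat-potential theory, and close by contraction for small $T$; then extend and prove uniqueness by Gronwall. This is correct in structure, and your identification of the boundary-flux estimate (Lipschitz in $N$ with a $T^{1/2}$ prefactor) as the one nontrivial ingredient is accurate.

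One small technical caveat: you set $M>\tfrac12 p_0'(0^+)+1$, but condition \eqref{1.2} only places $p_0$ in $C^1(\bbR^+)\cap C([0,\infty))$, so the existence and finiteness of $p_0'(0^+)$ is not explicitly guaranteed. Since the definition of classical solution demands $N$ continuous on $[0,T_*)$ with $N(0)=\tfrac12 p_x(0,0)$, this is arguably an implicit hypothesis in the paper's setup rather than a defect in your argument; still, it is worth flagging rather than assuming silently. The remaining steps --- non-negativity of $\Phi(N)$ via Hopf's lemma, the self-map bound, and the extension/uniqueness arguments --- are routine once the potential representation is in hand.
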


The lemma can be proved via a fixed point argument using that 
\[
\Gamma(N)(t):=2\int_{0}^\infty G(t,s(t)-x)\partial_x p_0(x)dx+\int_0^t N(\tau)G_x(t-\tau,s(t)-s(\tau))d\tau,
\]
which is derived through Green's identity,
defines a contraction mapping $\Gamma$ on the space of $\{N\in C([0,T]): \|N\|_\infty\leq M\}$ for some $M>0$ when $T$ is sufficiently small, see e.g. \cite{CC13, friedman1959free}. Here $s(t):=\int_0^tN(\tau)d\tau$ and 
\beq\lb{fund}
G(t,x):=\frac{1}{\sqrt{2\pi t}} e^{-{|x|^2}/{2t}},
\eeq
is the Green function of the heat equation on the real line. After finding out $N(t)$, $p(t,x)$ can be solved from the first equation in \eqref{eq:MVHLinFP} on $[0,T]$. The solution can then be extended up to the time of the first blow-up, see Theorem 4.2 \cite{CC13}.

Now we give a proof of Proposition \ref{T.1.2} (1).
\begin{proof}[Proof of Proposition \ref{T.1.2} (1)]
Suppose by contradiction that a weak solution exists for all time. From the weak formulation in Definition \ref{def sol} (1), taking $\phi(t,x)=e^{-\mu x}$ for some $\mu>0$, we have
\beq\lb{1.3}
\begin{aligned}
&\int_0^\infty e^{-\mu x}p(t,x)dx-\int_0^\infty e^{-\mu x}p_0(x)dx\\
&\qquad =\int_0^t\int_0^\infty \frac{\mu^2}2 e^{-\mu x} p(\tau,x)+\alpha\mu\, e^{-\mu x} N(\tau)p(\tau,x)dxd\tau-\int_0^t N(\tau)d\tau 
\end{aligned}
\eeq
Since $p\geq 0$, this yields
\[
   \int_0^\infty e^{-\mu x}p(t,x)dx-\int_0^\infty e^{-\mu x}p_0(x)dx\geq \int_0^tN(\tau)\left(\alpha\mu\int_0^\infty e^{-\mu x}p(\tau,x)dx-1\right)d\tau.\]
Writing $M_\mu(t):=\mu \alpha \int_0^\infty e^{-\mu x}p(t,x)dx-1$, we get 
\[
M_\mu(t)-M_\mu(0)\geq {\mu \alpha}\int_0^tN(\tau)M_\mu(\tau)d\tau.
\]
By Gronwall's inequality (see e.g., \cite[Theorem 2.4.5]{GW}), using
$
M_\mu(0)=\mu\alpha\int_0^\infty e^{-\mu x}p_0(x)dx-1\geq 0
$
by \eqref{1.10}, we have for all $t\geq 0$ that
\[
M_\mu(t)=\mu\alpha\int_0^\infty e^{-\mu x}p(t,x)dx- 1\geq 0.
\]
Therefore by \eqref{1.3} again, we obtain
\[
 \int_0^\infty e^{-\mu x}p(t,x)dx-\int_0^\infty e^{-\mu x}p_0(x)dx\geq \frac{\mu^2}{2}\int_0^t\int_0^\infty e^{-\mu x}p(\tau,x)dxds.
\]
This implies that 
\[
\int_0^\infty e^{-\mu x}p(t,x)dx\geq e^{\frac{\mu^2}{2} t}\int_0^\infty e^{-\mu x}p_0(x)dx\to \infty \text{ as }t\to\infty,
\]
which is impossible. In fact, $e^{\frac{\mu^2}{2} t}\int_0^\infty e^{-\mu x}p_0(x)dx$ cannot be greater or equal than $\int_0^\infty p_0(x)dx$ since otherwise
\[
\int_0^\infty p(t,x)dx> \int_0^\infty e^{-\mu x}p(t,x)dx\geq  \int_0^\infty p_0(x)dx,
\]
but setting $\phi\equiv 1$ (in the domain of $[0,T']\times [0,\infty)$) in the weak formulation in Definition \ref{def sol} (1) reveals that the total mass of $p$ is non-increasing in time.
Solving $e^{{\mu^2} T/2}\int_0^\infty e^{-\mu x}p_0(x)dx=\int_0^\infty p_0(x)dx$ for $T$ gives the upper bound on the existing time of solutions when \eqref{1.10} holds.
The second part follows from \cite[Theorem 1.1]{HLS19}.
\end{proof}

\smallskip
{\bf Super-cooled Stefan problem}: 
Let $(p,N)$ be a classical solution to the Fokker-Planck equation \eqref{eq:MVHLinFP} in the time interval $[0,T)$.
It is well-known that the transformation 
\beq\lb{2.2}
u(t,x):=p(t,x-\alpha s(t)),\quad s(t):=\int_0^tN(\tau)d\tau
\eeq
turns the equation into supercooled Stefan problem:
\beq\lb{2.7}
\left\{ \begin{array}{lcl}
u_t=\frac{1}{2}u_{xx} &&\text{ in }\{(t,x): x>\alpha s(t),\, t\in (0,T)\},\\
s'(t)=\frac{1}{2}u_x(t,\alpha s(t)),\quad u(t,\alpha s(t))=0 &&\text{ for }t\in [0,T),\\
u(0,x)=p_0(x-\alpha s(0))&&\text{ for }x\in [\alpha s(0),\infty).
\end{array}\right.
\eeq

We start with the following simple lemma of a comparison principle between two solutions with sub-quadratic growth in unbounded space-time domains. 

\begin{lemma}\lb{l.sim}
Let $s_*:[0,T]\to [0,\infty)$ be a continuous function. Suppose $u_t\leq \frac{1}{2}u_{xx}$ in $D_T:=\{(t,x): x> s_*(t),\, t\in [0,T)\}$ in the sense of distribution, i.e. for any smooth, non-negative $\varphi$ that is compactly supported in $D_T$ we have
\[
\int_{D_T} u(t,x)\left[\varphi_t(t,x)+\frac{1}{2}\varphi_{xx}(t,x)\right]dxdt+\int_{s_*(0)}^\infty u_{0}(x)\varphi(0,x)dx\geq 0.
\]
Let $v\geq 0$ satisfy $v_t= \frac{1}{2}v_{xx}$ in $D_T$. Then if $u(0,\cdot)\leq v(0,\cdot)$ in $(s_*(0),\infty)$,
\[
\limsup_{x\to\infty}\sup_{t\in [0,T)}\left\{\frac{u(t,x)-v(t,x)}{x^2}\right\}\leq 0\quad \text{ and }\quad \sup_{t\in [0,T)} \limsup_{x\to s_*(t)} \{u(t,x)-v(t,x)\}\leq 0,
\]
we have $u\leq v$ in $D_T$.
\end{lemma}
\begin{proof}
For any $\eps>0$, define
\[
v_\eps(t,x):=v(t,x)+\eps t+\eps x^2.
\]
By the assumption, for all $M=M(\eps)>0$ large enough we have $v_\eps(t,M)\geq u(t, M)$ for all $t\in [0,T)$.
Since $v_\eps$ satisfies the heat equation, and $v_\eps(0,\cdot)\geq u(0,\cdot)$, we can apply the comparison principle (see e.g., \cite[Corollary 6.26]{Lieberman}) in $\{(t,x)\in D_T: x<M\}$ to conclude that $v_\eps\geq {u}$ in $\{(t,x)\in D_T: x<M\}$. Passing $M\to\infty$ yields $v_\eps\geq {u}$ in $D_T$. Then passing $\eps\to 0$ yields $v\geq {u}$ in $D_T$.
\end{proof}

\begin{lemma}\label{l.clascsol}
Let $p_0$ be a probability density supported on $(0,\infty)$ that satisfies \eqref{c.3.3}.
Suppose that a weak solution $(p,N)$ to \eqref{eq:MVHLinFP} exists for a short time. Then there exists $T_*>0$ such that $(p,N)$ can be extended to $[0,T_*)$ and $(p,N)$ is a classical solution for $t\in (0,T_*)$.
The maximal time of existence $T_*>0$ is characterized as
\[
T_*=\sup\{t>0: N(t)<\infty\}.
\]

\end{lemma}
\begin{proof}
Suppose $(p,N)$ is a weak solution to \eqref{eq:MVHLinFP} in $[0,T)$ for some $T>0$.
Let $u(t,x)=p(t,x-\alpha s(t))$, and so $u$ is supported in $D_T:=\{(t,x): x>\alpha s(t),\, t\in [0,T)\}$. 
Since $0\leq N(\cdot)\in L^1_{\loc}$,  $s(t)$ is a continuous, non-decreasing function satisfying $s(0)=0$. 
We extend $u$ by $0$ to $[0,T)\times \bbR $. 
By Definition \ref{def sol} (1),
we get for any test function $\varphi:[0,T']\times\bbR\to\bbR$ with $T'<T$, that is smooth and bounded,
\[
\begin{aligned}
\int_0^\infty u(T',x)\varphi(T',x)dx=    \int_0^{T'}\int_0^\infty & u(t,x)\left[\varphi_t(t,x)+\frac{1}{2}\varphi_{xx}(t,x)\right]dxdt\\
    & -\int_0^{T'} N(t)\varphi(t,\alpha s(t))dt+\int_0^\infty u_{0}(x)\varphi(0,x)dx.
\end{aligned}
\]
Thus $u$ solves the heat equation in the interior of $D_T$. Moreover, since $N\geq 0$, we have $u_t\leq \frac12u_{xx}$ in $(0,T)\times[0,\infty)$ in the sense of distribution (see Lemma \ref{l.sim} with $s_*\equiv 0$).
 
Since $\sup_{t\in [0,T)}\|u(t,\cdot)\|_{L^1((\alpha s(t),\infty))}<\infty$,
by the hypothesis on $p$, 
$$\eta(t,x):=\int_{\alpha s(t)}^x u(t,y)dy+s(t),$$ is uniformly bounded in $D_T$. 
The equation yields that $\eta(t,x)$ solves the heat equation in $D_T$ in the sense of distribution, and $\eta$ is continuous in both space and time. Then, after restricting $\eta$ to $(0,T)\times [M,\infty)$ for some $M> \alpha s(T)+1$, we can obtain an explicit representation formula for $\eta$. 
Indeed, using the classical reflection method for heat equation with a source on half-line for $\eta(t,x+M)-\eta(t,M)e^{-{x}}$ (see \cite[Section 4.1]{strauss}), we get for all $t\in (0,T)$ and $x>0$,
\begin{equation}
\label{eq:repre}
\begin{aligned}
&\eta(t,x+M)=\eta(t,M)e^{-x}\\
&\qquad+\int_0^\infty\left(G(t,x-y)-G(t,x+y)\right)(\eta(0,y+M)-\eta(0,M)e^{-y})dy\\
&\qquad+\int_0^t\int_0^\infty \left(G(t-\tau,x-y)-G(t-\tau,x+y)\right)\left(\frac{1}{2}\eta(\tau,M)e^{-y}-\eta_t(\tau,M)e^{-y}\right)dyd\tau\\
&\qquad\qquad\quad=\int_0^\infty\left(G(t,x-y)-G(t,x+y)\right)\eta(0,y+M)dy\\
&\qquad-\int_0^\infty \left(G_t(t,x-y)-G_t(t,x+y)\right)\eta(0,M)e^{-y}dy\\
&\qquad+\int_0^t\int_0^\infty \left(G(t-\tau,x-y)-G(t-\tau,x+y)\right)\frac{1}{2}\eta(\tau,M)e^{-y}dyd\tau
\end{aligned}
\end{equation}
where $G$ is the Green function given by \eqref{fund}, and the integral involving $\eta_t$ is justified by integration by parts. 
Using the formula \eqref{eq:repre}, and $\lim_{x\to\infty}\eta_x(0,x)=\lim_{x\to\infty}p_0(x)=0$ by the assumption, we obtain
\beq\lb{aa1}
\lim_{x\to\infty}\sup_{t\in [0,T)}u(t,x)=\lim_{x\to\infty}\sup_{t\in [0,T)}\eta_x(t,x)=0.
\eeq
Moreover, it follows from the formula \eqref{eq:repre} (or from the parabolic interior regularity theory \cite[Theorem 11.5]{Lieberman} and \eqref{aa1}) that
\beq\lb{55555}
\lim_{x\to\infty} \partial_x u(t,x)=0\quad\text{ for all $t\in (0,T)$}.
\eeq

Now set
\beq\lb{utilde}
\tilde{u}(t,x):=\int_0^\infty (G(t,x-y)-G(t,x+y))p_0(y)dy
\eeq
which is then the solution to the heat equation in $(0,T)\times\bbR^+$ satisfying $ \tilde{u}(t,0)=0$ and $ \tilde{u}(0,\cdot)=p_0(\cdot)$. The assumption on $p_0$ yields $\lim_{x\to\infty}\sup_{t\in [0,T)}\tilde u(t,x)=0$.
 Due to \eqref{aa1}, it follows from Lemma \ref{l.sim} that $u\leq \tilde{u}$ in $[0,T)\times \bbR$. 
Due to  the first condition in \eqref{c.3.3}, there exists $\delta>0$ and $h>0$ such that $\tilde{u}<\frac{1}{\alpha}$ for all $x\in (0,h)$ and $t\in [0,\delta)$. 
Thus, the same inequality holds for $u$ in place of $\tilde{u}$. 
By taking $\delta$ to be small, we can assume that $h>s(\delta)$. Then, since $u$ is continuous for $t>0$, for some $M$ such that $M>s(\delta)+1$, \cite[Theorem 1.3]{classical8} yields that the following equation possesses a unique classical solution for a short time. For any $\eps\in (0,\delta)$,
\[
\left\{ \begin{array}{lcl}
\bar{u}_t=\frac{1}{2}\bar{u}_{xx} &&\text{ in }\{(t,x): x\in (\alpha \bar{s}(t),M),\, t>0\},\\
\bar{s}'(t)=\frac{1}{2}\bar{u}_x(t,\alpha \bar{s}(t)), \bar{s}(0)=s(\eps)&&\text{ for }t>0,\\
\bar{u}(t,\alpha \bar{s}(t))=0, \bar{u}(t,M)=g_\eps(t), &&\text{ for }t>0,\\
\bar{u}(0,x)=u(\eps,x)&&\text{ for }x\in (\alpha s(\eps), M),
\end{array}\right.
\]
where $g_\eps(t):=u(t+\eps,M)$.

Note that $u$ and $\bar{u}$ are smooth in their support, respectively, for $t>0$ by parabolic interior Schauder estimates (see \cite{bf}).
Then applying \cite[Theorem 3.1]{fasano1981new} to
\[
\int_{\alpha s(t+\eps)}^{x}\int_{\alpha s(t+\eps)}^y \left(1-\alpha u(t+\eps,z)\right)dz dy\quad \text{and}\quad \int_{\alpha \bar{s}(t)}^{x}\int_{\alpha \bar{s}(t)}^y \left(1-\alpha \bar{u}(t,z)\right)dz dy,
\]
yields $\bar{u}(t,\cdot)=u(t+\eps,\cdot)$ and $\bar{s}(t)=s(t+\eps)$ for all $t\geq 0$ small enough. 
By \eqref{aa1} and \eqref{55555}, $p(\frac\delta2,\cdot)=u(\frac\delta2,\cdot+\alpha s(\frac\delta2))$ satisfies \eqref{1.2}. Finally, Lemma \ref{T.1.1} yields the conclusion.
\end{proof}

\smallskip
{\bf Self-similar solutions}: For any $\beta>0$, consider the following functions
\beq\lb{2.13}
U(t,x;c,\beta):={2\alpha^{-1}\beta}e^{\beta^2}\int_\beta^{\frac{x-c}{\sqrt{2t}}}e^{-z^2}dz\quad \text{ and }\quad S(t;c,\beta)=\alpha^{-1}(c+\beta\sqrt{2t}).
\eeq
These functions come from self-similar solutions to the supercooled Stefan problem (see  \cite{And04, Car45}): the pair $(U(t,x;c,\beta),S(t;c,\beta))$ satisfies
\[
\left\{ \begin{array}{lcl}
U_t=\frac{1}{2}U_{xx} &&\text{ in }\{(t,x): x>\alpha S(t;c,\beta),\, 0<t<T\},\\
S'(\cdot;c,\beta)=\frac{1}{2}U_x(\cdot,\alpha S(\cdot;c,\beta);c,\beta) &&\text{ on }(0,\infty).
\end{array}\right.
\]

It is easy to see that for all $x,t>0$ ,
\beq\lb{2.3}
\beta_\infty(\beta):=\alpha U(0,x;c,\beta)=\lim_{x\to \infty }\alpha U(t,x;c,\beta)={2\beta}e^{\beta^2}\int_\beta^{\infty}e^{-z^2}dz,
\eeq
and for all $\beta>0$, $\beta_\infty(\beta)$ takes all values in $(0,1)$. Indeed, for all $\beta>0$,
\[
\beta_\infty(\beta) = 2\beta \int_0^\infty e^{-y^2-2\beta y}dy=\int_0^\infty e^{-(2\beta)^{-2}z^2-z}dz<\int_0^\infty e^{-z}dz=1.
\]
Moreover, we have the following estimate for all $\beta\geq 1$,
\beq\lb{2.12}
\begin{aligned}
\beta_\infty(\beta) =\int_0^\infty e^{-(2\beta)^{-2}z^2-z}dz\geq  \int_0^{\beta} e^{-z}(1-(2\beta)^{-2}z^2)dz\geq 1-\frac{1}{\beta^2},
\end{aligned}
\eeq
where we used that $\int_0^\infty e^{-z}z^2dz=2$.
From the equality in \eqref{2.12}, we also know that $\beta_\infty(\cdot)$ is an increasing function.

\subsection{Comparison lemmas}
\label{sc22}
We first present the following comparison lemma. 
Instead of comparing the solutions to  the super-cooled Stefan problem \eqref{2.7}, we consider a linear combination of the solution and its integration. This will allow us to compare a solution that is possibly large at some points with the self-similar solution $U$ (which is no greater than $\beta_\infty/\alpha$).

\begin{lemma}\lb{L.2.1}
Suppose $(u_1,s_1),(u_2,s_2)$ are two classical solutions to 
\eqref{2.7} in $[0,T)\times [0,\infty)$. Let $\gamma\in [0,1]$ and for $i=1,2$ write
\[
v_i(t,x):=\gamma u_i(t,x)+\left(\int_{\alpha s_i(t)}^x u_i(t,y)dy+s_i(t)\right).
\]
If the following holds for all $t\in [0,T)$,
\begin{itemize}[itemsep = 3 pt]
    \item[1. ] $s_1(t)\leq s_2(t)$,
    
    \item[2. ] $v_1(0,x)\geq v_2(0,x)$ for all $x>\alpha s_2(0)$,
    
    \item[3. ]  $\liminf_{x\to\infty}(v_1(t,x)-v_2(t,x))>0$ locally uniformly in $t$,
    
    \item[4. ] $v_1(t,\alpha s_2(t))\geq v_2(t,\alpha s_2(t))$,

\end{itemize}
then for all $t\in [0,T)$ and $x>\alpha s_2(t)$,
\[
v_1(t,x)\geq v_2(t,x).
\]
\end{lemma}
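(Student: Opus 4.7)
The crucial observation is that the linear combination $v_i$ was chosen precisely so that it satisfies a plain heat equation on the moving domain $\{x>\alpha s_i(t)\}$, eliminating the free boundary. The proof then reduces to a parabolic comparison principle for $w:=v_1-v_2$ on the joint domain $\{x>\alpha s_2(t),\,t\in[0,T)\}$ (which lies inside both $\{x>\alpha s_i(t)\}$ by hypothesis 1).

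\textbf{Step 1: PDE for $v_i$.} I would compute $\partial_t v_i$ directly. By Leibniz's rule,
\[
\partial_t\!\int_{\alpha s_i(t)}^x u_i(t,y)\,dy=-\alpha s_i'(t)\,u_i(t,\alpha s_i(t))+\int_{\alpha s_i(t)}^x \partial_t u_i(t,y)\,dy.
\]
Using the Dirichlet condition $u_i(t,\alpha s_i(t))=0$ kills the boundary term, and using $\partial_t u_i=\tfrac12 \partial_{yy}u_i$ reduces the integral to $\tfrac12(\partial_x u_i(t,x)-\partial_x u_i(t,\alpha s_i(t)))$. The Stefan condition $s_i'(t)=\tfrac12 \partial_x u_i(t,\alpha s_i(t))$ cancels the remaining boundary term, giving
\[
\partial_t v_i=\tfrac{\gamma}{2}\partial_{xx}u_i+\tfrac12 \partial_x u_i=\tfrac12 \partial_{xx} v_i,
\]
since $\partial_{xx}v_i=\gamma \partial_{xx}u_i+\partial_x u_i$. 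Thus $v_i$ solves the heat equation in $\{x>\alpha s_i(t)\}$, and in particular $w$ satisfies $w_t=\tfrac12 w_{xx}$ in $\{x>\alpha s_2(t),\,t\in(0,T)\}$.

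\textbf{Step 2: Maximum principle.} I would now apply the parabolic comparison principle on the domain $\Omega:=\{(t,x):t\in[0,T),\,x>\alpha s_2(t)\}$. Its parabolic boundary consists of three pieces: the initial slice $\{t=0\}$, the lateral curve $\{x=\alpha s_2(t)\}$, and the ``boundary at infinity.'' On the first, $w\geq 0$ by hypothesis 2. On the second, using $u_2(t,\alpha s_2(t))=0$ one has $v_2(t,\alpha s_2(t))=s_2(t)$, and hypothesis 4 gives exactly $w(t,\alpha s_2(t))\geq 0$. Since $\Omega$ is unbounded in $x$, I would handle infinity with a standard localization: for any $t_0<T$ and any $\varepsilon>0$, hypothesis 3 provides $R=R(t_0,\varepsilon)$ so that $w(t,R)\geq \varepsilon$ on $[0,t_0]$; applying the classical parabolic comparison principle to $w+\varepsilon$ on the bounded domain $\{(t,x)\in [0,t_0]\times[\alpha s_2(t),R]\}$ yields $w\geq -\varepsilon$ throughout, and letting $\varepsilon\to 0$ concludes $w\geq 0$ on $\Omega$.

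\textbf{Main obstacle.} The only delicate point is regularity: for \emph{generalized} solutions, $v_i$ may not be $C^{1,2}$ up to the moving boundary, so ``the heat equation'' for $v_i$ and the classical comparison principle must be interpreted in a weak sense. I would invoke Lemma \ref{l.clascsol} to work on the time interval where both $(u_i,s_i)$ are classical, apply the argument above there, and then extend by continuity/limit in case of blow-up; alternatively, one can formulate the comparison principle directly at the level of weak solutions using Steklov averages or a suitable test-function argument exploiting that $v_i$ is a weak heat solution with prescribed lateral data. The curve $s_2(t)$ is monotone non-decreasing (by the Stefan condition and $u_2\geq 0$), which supplies enough regularity of the lateral boundary for the standard comparison principle to apply after smooth approximation.
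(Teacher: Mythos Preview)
Your proposal is correct and follows essentially the same approach as the paper: verify that each $v_i$ satisfies the heat equation on $\{x>\alpha s_i(t)\}$, then apply the maximum principle to $w=v_1-v_2$ on $\{x>\alpha s_2(t)\}$ using hypotheses 2--4 on the parabolic boundary. Your Step 1 spells out the computation the paper summarizes as ``direct computation yields,'' and your localization argument for the unbounded domain (and the regularity discussion) make explicit what the paper leaves implicit in the phrase ``follows from the maximum principle.''
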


\begin{proof}
Direct computation yields
\[
\left\{ \begin{array}{lcl}
(v_i)_t=\frac{1}{2}(v_i)_{xx} &&\text{ in }\{(t,x): x>\alpha s_i(t),\, t\in (0,T)\},\\
v_i(t,\alpha s_i(t))=s_i(t),\, (v_i)_x(t,\alpha s_i(t))=2\gamma s_i'(t) &&\text{ for }t\in [0,T).
\end{array}\right.
\]

By the assumptions, we have $w:=v_1-v_2$ satisfies
\[
w_t=\frac{1}{2}w_{xx}\quad \text{ in }\{(t,x): x>\alpha s_2(t),\, t\in (0,T)\},
\]
and $w(t,\alpha s_2(t))\geq 0$. Also since the condition on the initial data yields $w(0,\cdot)\geq 0$ on $\{x>\alpha s_2(0)\}$, and due to the condition 3, the conclusion follows from the maximum principle (in bounded space-time domain $\{(t,x): x\in(\alpha s_2(t),N),\, t\in [0,T']\}$ for any $T'<T$ and for sufficiently large $N$) (see e.g., \cite[Corollary 6.26]{Lieberman}).
\end{proof}

We also use the following transformation as done in \cite{classical8}: recall that $(u,s)$ is a solution to \eqref{2.7}, and define
\beq\lb{2.10}
m(t,x):=\int_{\alpha s(t)}^{x}\int_{\alpha s(t)}^y \left(1-\alpha  u(t,z)\right)dz dy,
\eeq
and
\[
m_0(x):= \int_{\alpha s(0)}^{x}\int_{\alpha s(0)}^y \left(1-\alpha u(0,z)\right)dz dy.
\]
Then $m$ satisfies the following problem
\beq\lb{3.1}
\left\{
\begin{aligned}
    & m_t=\frac{1}{2}m_{xx}-1 &&\text{ in }\{(t,x): x>\alpha s(t),\, 0<t<T\},\\
    & m(t,\alpha s(t))=m_x(t,\alpha s(t))=0   &&\text{ for }t\in [0,T),\\
    &m(0,x)=m_0(x)&&\text{ for }x\in [\alpha s(0),\infty).
\end{aligned}\right.
\eeq

Note that in \eqref{3.1}, differentiation of $s(t)$ is not involved. However intuitively $s(t)$ can still be identified through the equation because of the two boundary information (if known $s(t)$, only one boundary data is needed to solve for $m$).
We have the following comparison principle.

\begin{lemma}\lb{T.3.1}
Suppose $(u_1,s_1),(u_2,s_2)$ are two classical solutions to \eqref{2.7} in $[0,T)\times [0,\infty)$ with non-negative initial data $u_{1,0}, u_{2,0}$ that are supported in $(\alpha s_1(0),\infty)$ and $(\alpha s_2(0),\infty)$ respectively. Write their corresponding transformations as $m_1,m_2$. Suppose the following holds for all $t\in [0,T)$
\begin{itemize}
    \item[1. ] $(s_2(0)\leq s_1(0),s_2'(0)< s_1'(0))\text{ or } (s_2(0)< s_1(0))$,
    
    \item[2. ] $m_{2}(0,\cdot)\geq  m_{1}(0,\cdot)$ in $[0,\infty)$,
    
    \item[3. ] $m_2(t,x)\geq 0$ for $x>\alpha s_2(0),$
    
    \item[4. ] $\liminf_{x\to\infty }(m_2(t,x)- m_1(t,x))>0,\,\liminf_{x\to\infty} m_2(t,x)> 0$.
\end{itemize}
Then $s_2(t)< s_1(t)$, and $m_{1}(t,\cdot)< m_{2}(t,\cdot)$ for all $t\in (0,T)$.
\end{lemma}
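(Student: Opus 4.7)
The plan is to compare $m_1$ and $m_2$ directly via the parabolic maximum principle, exploiting the fact that both satisfy $(m_i)_t=\tfrac{1}{2}(m_i)_{xx}-1$ with identical forcing, so that $w:=m_2-m_1$ is caloric on the overlap of their domains. The strict ordering $s_2<s_1$ will then be forced by a Hopf boundary-point argument at the first would-be contact time of the two free boundaries.

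First I would use hypothesis 1 together with continuity of $s_i$ and $s_i'$ to obtain $\delta>0$ with $s_2(t)<s_1(t)$ on $(0,\delta)$: when $s_2(0)<s_1(0)$ this is immediate, while when $s_2(0)=s_1(0)$ and $s_2'(0)<s_1'(0)$ it follows from a first-order Taylor expansion. Let $t_0\in(0,T]$ be the supremum of times for which the strict ordering holds, and suppose for contradiction that $t_0<T$, so that by continuity $s_1(t_0)=s_2(t_0)$. On the parabolic region $\Omega_{t_0}:=\{(t,x):x>\alpha s_1(t),\,0<t<t_0\}$, which sits inside both domains since $s_2\le s_1$ there, $w$ satisfies $w_t=\tfrac{1}{2}w_{xx}$ with boundary data $w(0,\cdot)\geq 0$ on $(\alpha s_1(0),\infty)$ by hypothesis 2; $w(t,\alpha s_1(t))=m_2(t,\alpha s_1(t))\geq 0$ by hypothesis 3 (using that $\alpha s_1(t)\geq\alpha s_1(0)\geq\alpha s_2(0)$ since the Stefan free boundary is non-decreasing, while $m_1$ vanishes on its own free boundary); and $\liminf_{x\to\infty}w(t,x)>0$ by hypothesis 4. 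The parabolic maximum principle on this unbounded strip, justified by the at-most-quadratic growth of each $m_i$ and therefore at-most-linear growth of $w$, then yields $w\geq 0$ on $\overline{\Omega_{t_0}}$.

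At the candidate contact point $P_0:=(t_0,\alpha s_1(t_0))=(t_0,\alpha s_2(t_0))$, both $m_1$ and $m_2$ vanish, and both $(m_1)_x$ and $(m_2)_x$ vanish, by their respective free-boundary conditions; hence $w(P_0)=0$ and $\partial_x w(P_0)=0$. Since $P_0$ lies on the lateral boundary of $\Omega_{t_0}$ with inward normal in the $+x$ direction, Hopf's boundary-point lemma forces $\partial_x w(P_0)>0$ unless $w\equiv 0$ on $\Omega_{t_0}$. But $w\equiv 0$ contradicts $\liminf_{x\to\infty}w>0$, and $\partial_x w(P_0)>0$ contradicts $\partial_x w(P_0)=0$; either branch is a contradiction. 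Thus $t_0=T$, giving $s_2(t)<s_1(t)$ for all $t\in(0,T)$. With this strict ordering in hand, the same maximum principle applied on $\Omega:=\{(t,x):x>\alpha s_1(t),\,0<t<T\}$ gives $w\geq 0$ there, and the strong maximum principle (ruling out $w\equiv 0$ via hypothesis 4) upgrades this to $w>0$ in the interior, which is the desired $m_1<m_2$.

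The main technical obstacle is rigorously justifying Hopf's boundary-point lemma at the moving free boundary $P_0$: one needs $s_1$ to be $C^1$ (or slightly better) near $t_0$ to secure an interior parabolic ball/cone condition, which should be inherited from the classical-solution hypothesis through Lemma \ref{T.1.1}. A secondary, routine concern is the maximum principle on the unbounded strip $\Omega_{t_0}$, which is handled by the polynomial growth of $w$ coming from the $m$-transform together with the positive-limit condition in hypothesis 4.
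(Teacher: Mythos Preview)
Your argument is correct and is the standard route to this comparison principle; the paper itself does not give a proof but simply cites Lemma~3.1 and Remark~3.2 of \cite{classical8}, whose argument your proposal essentially reproduces. The two technical points you flag (Hopf's lemma at the $C^1$ moving free boundary and the maximum principle on the unbounded strip, the latter justified by the at-most-linear growth of $w=m_2-m_1$ once the common $\tfrac12 x^2$ terms cancel) are indeed the only real checks, and both follow from standard parabolic theory under the classical-solution hypothesis.
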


\begin{proof}
The proof is identical to that in Lemma 3.1 and Remark 3.2 in \cite{classical8}.
\end{proof}

Recall the self-similar solutions $(U,S)$ given in \eqref{2.13}. The idea of controlling $N(t)$ in \eqref{eq:MVHLinFP} and then proving long time existence of solution is to apply the above two comparison principles to compare the free boundaries of a general solution $u$ and the self-similar solution $U$ with certain choices of $c,\beta$. 
We first show that $S(t;c,\beta)\geq s(t)$ for $c>0$.

\begin{lemma}\lb{L.3.0}
Let $(u,s)$ be a classical solution to the super-cooled Stefan problem \eqref{2.7} for $t\in [0,T)$, such that
\beq\lb{c.3.4}
\|u_0\|_{L^1(\bbR^+)}\leq 1,\quad\|u_0\|_\infty<\infty, \quad  \lim_{x\to 0 } u_0(x)+\lim_{x\to\infty } u_0(x)=0,
\eeq
and 
\beq\lb{c.3.5}
\int_0^x (1-\alpha u_0(y))dy>0, \quad \mbox{for all } x\in (0,\infty).
\eeq
There exists $C_0\geq 2$ depending only on $u_0$ such that for all $\beta\geq C_0$, we have 
\[
s(t)\leq S(t;0,\beta)=\alpha^{-1}\beta\sqrt{2t}\quad\text{ for all $t\in [0,T)$.}
\]

\end{lemma}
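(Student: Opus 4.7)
The strategy is to apply the comparison principle Lemma~\ref{T.3.1} with $(u_1, s_1)=(U(\cdot;c,\beta), S(\cdot;c,\beta))$, the shifted self-similar solution with a small $c>0$, in the role of the ``upper'' pair, and $(u_2,s_2)=(u,s)$ as the ``lower'' pair; then let $c\to 0^+$. The shift is needed because $S(0;0,\beta)=0=s(0)$, so with $c>0$ I gain the strict initial ordering $s_2(0)=0<\alpha^{-1}c=s_1(0)$ required by hypothesis~1 of Lemma~\ref{T.3.1}.

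The heart of the proof is verifying hypothesis~2, $m_1(0,\cdot)\le m_2(0,\cdot)$. From \eqref{2.13} and \eqref{2.3} the self-similar profile starts as the constant $U(0,x;c,\beta)=\alpha^{-1}\beta_\infty(\beta)$ on $(c,\infty)$, giving the explicit form
\[
m_1(0,x)=\tfrac{1}{2}(1-\beta_\infty(\beta))(x-c)^2, \qquad x\ge c.
\]
For $(u_2,s_2)=(u,s)$, I rewrite $m_2(0,x)=\int_0^x\Phi_0(y)\,dy$ with $\Phi_0(y):=\int_0^y(1-\alpha u_0(z))\,dz$. Hypothesis \eqref{c.3.5} ensures $\Phi_0>0$ on $(0,\infty)$, while $\limsup_{x\to 0^+}u_0=0$ from \eqref{c.3.4} combined with $u_0\in L^1$ makes $\Phi_0(y)/y$ tend to $1$ both as $y\to 0^+$ and as $y\to\infty$. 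Continuity on $(0,\infty)$ then yields $c_0:=\inf_{y>0}\Phi_0(y)/y>0$, giving the uniform bound $m_2(0,x)\ge c_0 x^2/2\ge c_0(x-c)^2/2$. The expansion \eqref{2.12} gives $1-\beta_\infty(\beta)\le \beta^{-2}$ for $\beta\ge 1$, so taking $C_0:=\max(2,c_0^{-1/2})$ ensures $1-\beta_\infty(\beta)\le c_0$ whenever $\beta\ge C_0$, and hypothesis~2 holds.

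Hypothesis~4 follows from the quadratic-in-$x$ growth of $m_i(t,\cdot)$ at infinity. At $t=0$ one has $m_2(0,x)/x^2\to \tfrac{1}{2}$ (using $u_0\in L^1$) and $m_1(0,x)/x^2\to \tfrac{1}{2}(1-\beta_\infty(\beta))$, with strict inequality; this asymptotic ordering propagates in time under \eqref{3.1} because subtracting $x^2/2$ cancels the source and yields the pure heat equation for the remainder. Once all four hypotheses are in place, Lemma~\ref{T.3.1} produces $s(t)<\alpha^{-1}(c+\beta\sqrt{2t})$ on $(0,T)$, and sending $c\to 0^+$ concludes $s(t)\le\alpha^{-1}\beta\sqrt{2t}$.

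The step I expect to be hardest is hypothesis~3 of Lemma~\ref{T.3.1}, namely the uniform nonnegativity $m_2(t,\cdot)\ge 0$ on its domain for every $t\in[0,T)$. The sign of the integrand $1-\alpha u(t,\cdot)$ is not preserved under the Stefan flow, so this cannot be read off from the initial data; and the substitution $w:=m_2+t$ solves the heat equation but only yields the weak bound $m_2\ge -t$. The plan is a bootstrap: work on the maximal subinterval $[0,T^*)\subset[0,T)$ on which $m_2\ge 0$, apply Lemma~\ref{T.3.1} there to obtain the free-boundary comparison, and use the Hopf-type boundary conditions $m_2=\partial_x m_2=0$ at $x=\alpha s(t)$ together with the equation \eqref{3.1} (which forces $m_{2,xx}(t,\alpha s(t)^+)=1>0$, so $m_2$ is strictly positive in an interior strip near the free boundary) to preclude the formation of a first interior zero, thereby forcing $T^*=T$.
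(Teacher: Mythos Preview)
Your overall strategy matches the paper's: apply Lemma~\ref{T.3.1} with $(u_1,s_1)=(U(\cdot;c,\beta),S(\cdot;c,\beta))$ and $(u_2,s_2)=(u,s)$ for $c>0$, then pass $c\to0^+$. Your verification of hypotheses~1 and~2 is correct; the argument via $c_0:=\inf_{y>0}\Phi_0(y)/y>0$ is a clean variant of the paper's two-range split into $(0,3\alpha]$ and $(3\alpha,\infty)$.

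There are two issues. The minor one concerns hypothesis~4: your claim that subtracting $x^2/2$ from $m$ yields the heat equation is false (it leaves a source $-\tfrac12$; subtracting $x^2$ would cancel it). But no propagation argument is needed: for each fixed $t$, the bound $\|u(t,\cdot)\|_{L^1}\le 1$ gives $m_x(t,y)\ge y-\alpha s(t)-\alpha$, hence $m(t,x)/x^2\to\tfrac12$, while $M(t,x)/x^2\to\tfrac12(1-\beta_\infty)$ from the explicit formula, so $m-M\to\infty$. The paper reaches the same conclusion by first comparing $u$ with a Dirichlet heat solution $\tilde u$ to get $u(t,x)\to 0$ uniformly in $t$, and then reading off \eqref{2.15}--\eqref{2.6}.

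The real gap is hypothesis~3. Your bootstrap does not close: at a first interior zero $(t^*,x^*)$ of $m$ one has $m_{xx}\ge 0$ and $m_t\le 0$, but the equation $m_t=\tfrac12 m_{xx}-1$ then only yields $m_t\in[-1,0]$, which is no contradiction; the strict positivity of $m$ in a strip near the free boundary does nothing to rule out a zero forming away from it, and the free-boundary comparison you obtain on $[0,T^*)$ gives no control on the sign of $m$ either. The correct and much simpler argument is to note that $\psi:=m_x$ solves the \emph{pure heat equation} in $\{x>\alpha s(t)\}$ (differentiate $m_t=\tfrac12 m_{xx}-1$ in $x$), with $\psi(t,\alpha s(t))=0$, $\psi(0,\cdot)=\Phi_0>0$ by \eqref{c.3.5}, and $\psi(t,x)\to+\infty$ as $x\to\infty$. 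The parabolic maximum principle then gives $\psi\ge 0$, and hence $m(t,x)=\int_{\alpha s(t)}^x\psi(t,y)\,dy\ge 0$. This is precisely the ``maximum principle'' the paper invokes (stated explicitly in the proof of Lemma~\ref{P.1}, implicit in Lemma~\ref{L.3.0}), and it is the observation you are missing.
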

\begin{proof}
Recall that $u(t,x)$ solves the supercooled Stefan problem: 
\begin{align*}
    & u_t=\frac{1}{2}u_{xx}\quad \text{ in }\{x\in (\alpha s(t),\infty),t\in [0,T)\},\\
    & u(t,\alpha s(t))=0,\quad\text{ and }\quad u_x(t,\alpha s(t))=s'(t).
\end{align*}
Since $s(\cdot)\geq 0$, comparison principle yields $u\leq \tilde{u}$ where the latter is given by \eqref{utilde}.
By the assumption $u(0,x)\to 0$ as $x\to\infty$, we have $\lim_{x\to\infty }\tilde{u}(t,x)=0$ (uniformly in $t$), which implies that $\lim_{x\to\infty }u(t,x)=0$ uniformly in $t$. Thus for all $t\in [0,T)$ we have 
\beq\lb{2.15}
\liminf_{x\to\infty} U(t,x;c,\beta)=\alpha^{-1}\beta_\infty>0= \lim_{x\to\infty} u(t,x).
\eeq

Let $m$ be defined as in \eqref{2.10}, and we also define 
\[
M(t,x;c,\beta):=\int_{\alpha S(t)}^{x}\int_{\alpha S(t)}^y \left(1-\alpha U(t,z;c,\beta)\right)dz dy.
\]
It follows from \eqref{2.15} that 
\beq\lb{2.6}
\liminf_{x\to\infty}(m(t,x)-M(t,x;c,\beta))> 0.
\eeq

Since the total mass of $u(t,\cdot)$ is bounded from above by $1$, we know
\beq\lb{2.16}
m_0(x)= \int_{0}^{x}\int_{0}^y \left(1-\alpha u_0(z)\right)dz dy\geq \frac{x^2}{2}-\alpha x.
\eeq
Also by the assumption \eqref{c.3.4}--\eqref{c.3.5}, we obtain $m_0(x)>0$ for all $x\in (0,\infty)$ and $m_0(x)\geq \frac{x^2}{4}$ for $x>0$ small enough.
In view of \eqref{2.3} and \eqref{2.12}, there exists $C_0\geq 2$ depending only on $u_0$ such that for all $c\geq 0$, if $\beta\geq C_0$, we have 
\[
M(0,\cdot;c,\beta)=  \frac{1-\beta_\infty}{2}(x-c)_+^2\leq  \frac{x^2}{2\beta^2}< m_0(\cdot)\quad\text{ in }(0,3\alpha].
\]
While for $x>3\alpha$, \eqref{2.16} yields
\[
M(0,\cdot;c,\beta)\leq  \frac{x^2}{8}< \frac{x^2}{2}-\alpha x\leq m_0(\cdot).
\]
Note that $S(0;c,\beta)=\alpha^{-1}c $. Then it follows from Lemma \ref{T.3.1} with $u_1=U,u_2=u$ that
$s(t)< S(t;c,\beta)$ for all $t\in [0,T)$ and $c>0$. By passing $c\to 0$, we get
\beq\lb{2.4}
s(t)\leq  S(t;c,\beta)\quad\text{ for all $t\in [0,T)$ and $c\geq 0$.}
\eeq
\end{proof}

In Lemma \ref{L.3.0}, we only applied the second comparison lemma (Lemma \ref{T.3.1}). From its proof, note that we can replace the assumption \eqref{c.3.5} by $\int_0^x\int_0^y (1-\alpha u_0(z))dzdy>0$ for all $x>0$, which is slightly weaker than \eqref{c.3.5}.

In order to compare $s(t),S(t;c,\beta)$ for $c<0$, we also need the following computations.
For some $\gamma\in (0,1)$ to be determined, write 
\beq\lb{d.V}
V(t,x;c,\beta):=\gamma U(t,x;c,\beta)+\left(\int_{\alpha S(t;c,\beta)}^x U(t,y;c,\beta)dy+S(t;c,\beta)
\right)
\eeq
and
\beq\lb{d.v}
v(t,x):=\gamma u(t,x)+\left(\int_{\alpha s(t)}^x u(t,y)dy+s(t)
\right).
\eeq

\begin{lemma}\lb{L.3.1}
Under the assumptions of Lemma \ref{L.3.0}, there exists $\gamma_0\in (0,1)$ (depending only on $u_0$) such that for all $\gamma\in (0,\gamma_0)$, and $\beta$ satisfying
\[
\beta\geq \max\{C_0,100,2\sqrt{\alpha/\gamma},4\sqrt{2T}\gamma^{-1},4\sqrt{2T}(\alpha\gamma)^{-1},4\sqrt{2T}\gamma^{-1}(\ln (4\sqrt{2T}\gamma^{-1}))^2\},
\] 
and $c\in (-\alpha^{-1}\beta\sqrt{2T},0)$, if $s(t)\geq S(t;c,\beta)$ for all $t\in [0,t_1]$ for some $t_1<T$, then the following inequalities hold
\begin{align}
    &\liminf_{x\to\infty}(V(t,x;c,\beta)-v(t,x))=\infty&&\text{ for all }t\in [0,T),\lb{i.3.1}\\
&V(0,\cdot;c,\beta)\geq v(0,\cdot)&&\text{ in }[0,\infty),\lb{i.3.2}\\
&V(t,x;c,\beta)\geq \alpha^{-1}x &&\text{ for all $t\in [0,t_1]$ and $x\leq \beta\sqrt{2T}$}.\lb{i.3.3}
\end{align}
\end{lemma}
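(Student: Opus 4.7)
The strategy is to verify the three inequalities \eqref{i.3.1}--\eqref{i.3.3} separately via direct computation from the explicit Gaussian formula \eqref{2.13} for the self-similar pair $(U,S)$, combined with the Mills-ratio asymptotic $\int_\beta^\infty e^{-z^2}\,dz \sim e^{-\beta^2}/(2\beta)$, which yields $1-\beta_\infty = (2\beta^2)^{-1}+O(\beta^{-4})$. The role of the parameter $\gamma$ is to carry a boundary-layer amplitude $\gamma U_x$ that is large near $x=\alpha S$, so it can beat the linear deficit $U-\alpha^{-1}$ in the bulk; the size conditions on $\beta$ are chosen precisely to make this compensation work on $x\leq\beta\sqrt{2T}$. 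Inequality \eqref{i.3.1} is immediate once one notes that $U(t,x;c,\beta)\to\alpha^{-1}\beta_\infty>0$ as $x\to\infty$ while $u(t,x)\to 0$ uniformly (by comparison with the heat equation on the half-line, as in the proof of Lemma \ref{L.3.0}), so $\int_{\alpha S}^x U\,dy$ grows linearly whereas $\int_{\alpha s}^x u\,dy\leq \|u(t,\cdot)\|_{L^1}\leq 1$, and the remaining pieces $\gamma(U-u)$, $S-s$ stay bounded on $[0,T)$.

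For \eqref{i.3.2}, evaluate at $t=0$: $U(0,\cdot;c,\beta)\equiv\alpha^{-1}\beta_\infty$ on $(c,\infty)$ and $S(0)=\alpha^{-1}c$. Setting $F(x):=\int_0^x(1-\alpha p_0(y))\,dy$, a short rearrangement gives
\[
\alpha\bigl(V(0,x;c,\beta)-v(0,x)\bigr) = \gamma\bigl(1-\alpha p_0(x)\bigr)+F(x) - (1-\beta_\infty)(\gamma+x-c).
\]
By \eqref{c.3.3} and \eqref{c.3.5}, $F>0$ on $(0,\infty)$ with $F'(0)=1$ and $F(x)\geq x/2-\alpha$ at infinity, and the set $\{x:\alpha p_0(x)>1\}$ is a compact subset of $(0,\infty)$ on which $F$ has a positive lower bound; one chooses $\gamma_0$ small (depending only on $u_0$) so that $\gamma(1-\alpha p_0(x))+F(x)$ is bounded below by a positive function $\eta(x)$ for all $\gamma\in(0,\gamma_0]$. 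The error satisfies $(1-\beta_\infty)(\gamma+x-c)\lesssim \beta^{-2}(\gamma+x+|c|)$ via $|c|<\alpha^{-1}\beta\sqrt{2T}$, and the size conditions on $\beta$ make it negligible against $\eta$ in the three regimes: $x$ near $0$ (where $\eta\gtrsim\gamma$), $x$ in the compact set $\{\alpha p_0>1\}$ (where $\eta$ has a fixed positive lower bound depending only on $u_0$), and $x$ large (where $F\gtrsim x$ beats the linear error).

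The crux is \eqref{i.3.3}. Set $W(t,x):=V(t,x;c,\beta)-\alpha^{-1}x = \gamma U + \int_{\alpha S}^x(U-\alpha^{-1})\,dy$, so that $W(t,\alpha S)=0$ and $W_x = \gamma U_x + (U-\alpha^{-1})$, where $U_x = (2\beta/(\alpha\sqrt{2t}))e^{\beta^2-\xi^2}$ with $\xi=(x-c)/\sqrt{2t}$. At $\xi=\beta$, $W_x = \alpha^{-1}(2\gamma\beta/\sqrt{2t}-1)\geq 0$ from $\beta\geq 4(\alpha\gamma)^{-1}$ and $t\leq T$. Integrating $W_x$ from $\alpha S$ to $x$: the positive contribution $\int_{\alpha S}^x \gamma U_x\,dy = \gamma U(t,x)$ reaches essentially $\gamma\alpha^{-1}\beta_\infty\approx\gamma/\alpha$ once $\xi-\beta\gtrsim(\log\beta)/\beta$; the negative contribution is bounded by $\alpha^{-1}(1-\beta_\infty)(x-\alpha S) \leq \alpha^{-1}(2\beta^2)^{-1}(1+\alpha^{-1})\beta\sqrt{2T}$ on $x\leq\beta\sqrt{2T}$. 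The stated lower bounds on $\beta$ make the former dominate the latter.

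The main obstacle is uniformity of $W\geq 0$ across the transition layer $\xi-\beta\in[0,O((\log\beta)/\beta)]$, in which both $U-\alpha^{-1}$ and $U_x$ vary rapidly; a careful two-scale Gaussian tail estimate via the Mills-ratio asymptotic is needed here, and the logarithmic squared factor in the last size condition on $\beta$ arises exactly from carrying out this layer analysis uniformly for $t$ close to $T$. Once the layer is controlled, the bulk estimate $W\geq 0$ for $\xi-\beta\gtrsim 1$ follows from elementary algebra using $1-\beta_\infty\approx(2\beta^2)^{-1}$ and $x-\alpha S\leq (1+\alpha^{-1})\beta\sqrt{2T}$.
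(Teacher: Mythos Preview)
Your strategy matches the paper's: verify each inequality by direct computation from \eqref{2.13}, using the Mills-ratio bound $1-\beta_\infty\leq\beta^{-2}$ from \eqref{2.12}, a three-regime case split for \eqref{i.3.2}, and a boundary-layer/bulk decomposition for \eqref{i.3.3}. Your identity $\alpha(V(0,x)-v(0,x))=\gamma(1-\alpha p_0(x))+F(x)-(1-\beta_\infty)(\gamma+x-c)$ is correct and is exactly the rearrangement the paper uses (written slightly differently), and you correctly locate the crux of \eqref{i.3.3} in the layer $\xi-\beta\in[0,O((\ln\beta)/\beta)]$.

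One genuine slip in the bulk argument for \eqref{i.3.3}: your bound ``the negative contribution is bounded by $\alpha^{-1}(1-\beta_\infty)(x-\alpha S)$'' has the inequality in the wrong direction, since pointwise $\alpha^{-1}-U\geq\alpha^{-1}(1-\beta_\infty)$ (with equality only as $x\to\infty$). In particular, over the layer the integrand $\alpha^{-1}-U$ is of order $\alpha^{-1}$, not $\alpha^{-1}(1-\beta_\infty)$, so the integral $\int_{\alpha S}^x(\alpha^{-1}-U)\,dy$ picks up an extra term of size $\alpha^{-1}\sqrt{2t}(\ln\beta)/\beta$ that your decomposition misses. The paper avoids this by computing $\alpha\int_{\alpha S}^x U\,dy+\alpha S$ from below directly, obtaining a bound of the form $(1-\beta^{-2})x-O(\sqrt{2t}(\ln\beta)/\beta)+\beta^{-2}\alpha S$; the $(\ln\beta)/\beta$ loss and the $\beta^{-2}\alpha S\geq\beta^{-2}c$ term are then both absorbed by $\gamma\alpha U\geq\gamma(1-\beta^{-2})$ under the stated size conditions on $\beta$. (A smaller point: $W_x(t,\alpha S)\geq 0$ requires $\gamma\beta\geq\tfrac12\sqrt{2t}$, which comes from the $\sqrt{2T}$-type condition on $\beta$, not from $\beta\geq 4(\alpha\gamma)^{-1}$.) With these fixes your outline coincides with the paper's proof.
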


\begin{proof}
Due to \eqref{2.15}, clearly \eqref{i.3.1} holds for all $t,c,\beta$.
Using \eqref{2.3}, \eqref{2.12}, and $c\in ( -\alpha^{-1}\beta\sqrt{2T},0)$ yields for all $x\geq 0$, %
\[
V(0,x;c,\beta)\geq {\gamma{\alpha}^{-1}\beta_\infty}+{\alpha}^{-1}\beta_\infty x-{(\alpha\beta)}^{-1}\sqrt{2T}.
\]
Let us assume $\beta\geq \max\{4\gamma^{-1}\sqrt{2T},2\}$ and then $\beta_\infty\geq \frac{3}{4}$. 
By \eqref{2.12} and \eqref{d.V}, we get
\beq\lb{2.9}
\begin{aligned}
    &\quad\, \, V(0,x;c,\beta)-v(0,x)\\
    &\geq \gamma (\frac{3}{4}\alpha^{-1}-u(0,x))-\gamma(4\alpha)^{-1}-\alpha^{-1}(1-\beta_\infty)x+\alpha^{-1}\int_0^x(1-\alpha u_0(y))dy\\
    &\geq \gamma((2\alpha)^{-1}-u(0,x))+\alpha^{-1}\left(-\beta^{-2}x+\int_0^x(1-\alpha u_0(y))dy\right).
\end{aligned}
\eeq

In view of \eqref{c.3.4}, there exists $c,A>0$ such that
\[
u(0,x)\leq (4\alpha)^{-1}\text{ for all }x\in [0,c],\quad\text{ and }\quad u(0,x)\leq A\text{ for all }x\in [0,\infty).
\]
When $x\geq 2\alpha$, the right-hand side of \eqref{2.9}
\[
\geq -\gamma A+\alpha^{-1}(-\beta^{-2}x+x-\alpha)\geq -\gamma A+(1-2\beta^{-2})\geq 0,
\]
if $\beta\geq 2$ and $\gamma\leq (2A)^{-1}$.
Next when $x\leq  c$ (then $u(0,x)\leq (4\alpha)^{-1}$), \eqref{2.9} and $\beta\geq 2$ yield again $
V(0,x;c,\beta)-v(0,x)\geq 0$.
Lastly by the assumption \eqref{c.3.5}, there exists $\eps>0$ such that $\int_0^x(1-\alpha u_0(y))dy\geq \eps$ for $x\in [c,2\alpha]$. Thus we get
\begin{align*}
    V(0,x;c,\beta)-v(0,x)\geq \gamma((2\alpha)^{-1}-A)+(\alpha^{-1}\eps-2\beta^{-2})\geq 0
\end{align*}
if $\beta\geq 2\sqrt{\alpha/\eps}$ and $\gamma=\gamma(\eps,A)\leq \eps$ is small enough. Overall, we find that there exists $\gamma$ depending only on $u_0$ such that
\eqref{i.3.2}
holds for all $\beta\geq \max\{4\gamma^{-1}\sqrt{2T},2\sqrt{\alpha/\gamma},2\}$.

To prove the last inequality \eqref{i.3.3}, we need a lower bound on $U$. Below we write $S(t):=S(t;c,\beta)$ for abbreviation of notation. It follows from \eqref{2.13}, \eqref{2.12} and the fact $\int_0^\infty e^{-z}z^2dz=2$ that for $\beta\geq 2$, if $ \frac{x-\alpha S(t)}{\sqrt{2t}}\geq\frac{2\ln \beta}{\beta}$, we have
\beq\lb{1111}
\begin{aligned}
\alpha U(t,x;c,\beta)&= {2\beta}e^{\beta^2}\int_\beta^{\frac{x-c}{\sqrt{2t}}}e^{-(z')^2}dz'= \int_0^{2\beta\left(\frac{x-\alpha S(t)}{\sqrt{2t}}\right)} e^{-z-(2\beta)^{-2} z^2}dz\\
&\geq  \int_0^{2\beta\left(\frac{x-\alpha S(t)}{\sqrt{2t}}\right)} e^{-z}(1-(2\beta)^{-2}z^2)dz\geq \int_0^{2\beta\left(\frac{x-\alpha S(t)}{\sqrt{2t}}\right)} e^{-z}dz-\frac{1}{2\beta^2}\\
&= 1-\exp\left(-2\beta\left(\frac{x-\alpha S(t)}{\sqrt{2t}}\right)\right)-\frac{1}{2\beta^2}\geq 1-\frac{1}{\beta^2}.
\end{aligned}
\eeq
When $\beta\geq 2$ and $ \frac{x-\alpha S(t)}{\sqrt{2t}}<\frac{2\ln \beta}{\beta}$, since $\frac18\geq \frac{\ln\beta}{2\beta^3}$, direct computation yields
\beq\lb{2222}
\begin{aligned}
\alpha U(t,x;c,\beta)&=  \int_0^{2\beta\left(\frac{x-\alpha S(t)}{\sqrt{2t}}\right)} e^{-z-(2\beta)^{-2} z^2}dz\geq  \int_0^{2\beta\left(\frac{x-\alpha S(t)}{\sqrt{2t}}\right)} e^{-z-(2^{-1}\beta^{-3}\ln\beta) z}dz\\
&\geq \frac{8}{9}\left(1-\exp \left(-\frac{9\beta}{4}\left(\frac{x-\alpha S(t)}{\sqrt{2t}}\right)\right)\right).\\
&\geq \frac{1}{2}\min\left\{\beta \left(\frac{x-\alpha S(t)}{\sqrt{2t}}\right),1\right\}.
\end{aligned}
\eeq
Using these estimates, for any $t\in [0,T)$ and
\[
x\in \left[ \alpha S(t), \frac{2\sqrt{2t}\ln \beta}{\beta}+\alpha S(t) \right),
\]
we obtain 
\begin{align*}
V(t,x;c,\beta)\geq \frac{\gamma}{2\alpha}  \min\left\{\beta\left(\frac{x-\alpha S(t)}{\sqrt{2t}}\right),1\right\}+ S(t)
\end{align*}
and so to have $V(t,x;c,\beta)\geq {\alpha}^{-1}x$, it suffices to require $\beta\geq 2\gamma^{-1}\sqrt{2T}$ and $\frac{\beta}{\ln \beta}\geq 4\gamma^{-1}\sqrt{2T}$ which is indeed guaranteed by the assumption on $\beta$.
Next for
\beq\lb{case2}
x\in \left[ \frac{2\sqrt{2t}\ln \beta}{\beta}+\alpha S(t), \beta\sqrt{2T} \right],
\eeq
by \eqref{1111} and \eqref{2222}
we find (writing $U(t,y):=U(t,y;c,\beta)$)
\begin{align*}
&\quad\,\, \alpha\int_{\alpha S(t)}^x U(t,y;c,\beta )dy+\alpha S(t)\\
&= \alpha \int_{\alpha S(t)}^{\alpha S(t)+\frac{\sqrt{2t}}{\beta}} U(t,y )dy+\alpha \int_{\alpha S(t)+\frac{\sqrt{2t}}{\beta}}^{\alpha S(t)+\frac{2\sqrt{2t}\ln\beta}{\beta}} U(t,y )dy+\alpha \int_{\alpha S(t)+\frac{2\sqrt{2t}\ln\beta}{\beta}}^{x} U(t,y )dy+\alpha S(t)\\
&\geq 
\sqrt{2t}\left(\frac{1}{4\beta}+\frac{1}{2}\left(\frac{2\ln\beta}{\beta}-\frac{1}{\beta}\right)+\left(1-\frac{1}{\beta^2}\right)\left(\frac{x-\alpha S(t)}{\sqrt{2t}}-\frac{2\ln\beta}{\beta}\right)\right)+\alpha S(t)\\
&\geq -\frac{\sqrt{2t}}{4\beta}-\frac{\sqrt{2t}\ln\beta}{\beta}+\left(1-\frac{1}{\beta^2}\right)x+\frac{\alpha S(t)}{\beta^2}.
\end{align*}
Due to $\alpha S(t)\geq c\geq -\alpha^{-1}\beta\sqrt{2T}$, then for $x$ satisfying \eqref{case2} we obtain
\begin{align*}
\alpha V(t,x;c,\beta)-x &\geq \gamma\left(1-\frac1{\beta^2}\right)-\frac{\sqrt{2t}}{4\beta}-\frac{\sqrt{2t}\ln\beta}\beta-\frac{x}{\beta^2}+\frac{\alpha S(t)}{\beta^2}\\
&
\geq\frac{3}{4}\gamma-\sqrt{2T}\left(\frac{5}{4\beta}+\frac{\ln\beta}\beta+\frac{1}{\alpha\beta}\right)\geq 0,
\end{align*}
whenever 
\[
\beta\geq \max\{100,4\sqrt{2T}(\alpha\gamma)^{-1},4\sqrt{2T}\gamma^{-1}(\ln (4\sqrt{2T}\gamma^{-1}))^2\}.
\]
This is because for $c_0:=4\sqrt{2T}\gamma^{-1}>0$, either $\frac{\beta}{\ln\beta}\geq \frac{ 100}{\ln 100}\geq c_0$ or $\frac{\beta}{\ln\beta}\geq\frac{c_0(\ln c_0)^2}{\ln(c_0(\ln c_0)^2)}\geq c_0$.
We proved \eqref{i.3.3}.
\end{proof}

In the following proposition we show that if the curves $x=\alpha s(t)$ and $x=\alpha S(t;c,\beta)$ with $c<0$ intersect at time $t=t_0>0$, then they can only insect at $t=t_0$ for all $t\in [0,T)$. 
\begin{lemma}\lb{P.1}
Under the assumptions of Lemma \ref{L.3.0}, let $(\gamma,\beta)$ be from Lemma \ref{L.3.1}. 
For any fixed $t_0\in (0,T)$, if there is a value $c\in ( -\alpha^{-1}\beta\sqrt{2T},0)$ such that $s(t_0)=S(t_0;c,\beta)$, then for all $t\in (0,T)$,
\beq\lb{2.5}
s(t)-S(t;c,\beta)\text{ changes sign from positive to negative at }t=t_0
\eeq
(i.e. $s(t)-S(t;c,\beta)>0$ for all $t<t_0$ and $s(t)-S(t;c,\beta)< 0$ for all $t>t_0$).
\end{lemma}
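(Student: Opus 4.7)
My plan is to show that $s - S(\cdot;c,\beta)$ has exactly one zero on $[0,T)$, namely $t_0$, and that the sign goes from $+$ to $-$ across it. Set $t^* := \inf\{t > 0 : s(t) \leq S(t;c,\beta)\}$. Since $c < 0$ gives $s(0) = 0 > c/\alpha = S(0;c,\beta)$ while $s(t_0) = S(t_0;c,\beta)$ by hypothesis, continuity forces $t^* \in (0, t_0]$, $s(t^*) = S(t^*;c,\beta)$, and $s > S(\cdot;c,\beta)$ strictly on $[0, t^*)$. It then suffices to establish $s(t) < S(t;c,\beta)$ strictly for all $t \in (t^*, T)$, since this produces the claimed sign change and, by excluding any later zero, forces $t^* = t_0$.

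\textbf{Comparison up to $t^*$ and Hopf at the collision point.} On $[0, t^*]$ one has $s \geq S(\cdot;c,\beta)$, so I would apply Lemma \ref{L.2.1} with $(u_1, s_1) = (U(\cdot,\cdot;c,\beta), S(\cdot;c,\beta))$, $(u_2, s_2) = (u, s)$, and the transforms $V, v$ of \eqref{d.V}, \eqref{d.v}. Lemma \ref{L.3.1} supplies the three remaining hypotheses: the initial inequality \eqref{i.3.2}, the asymptotic condition \eqref{i.3.1}, and the lateral boundary estimate $V(t, \alpha s(t); c, \beta) \geq \alpha^{-1}(\alpha s(t)) = s(t) = v(t, \alpha s(t))$, the last of which combines \eqref{i.3.3} with the bound $\alpha s(t) \leq \beta\sqrt{2T}$ furnished by Lemma \ref{L.3.0}. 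Hence $V \geq v$ throughout $\{0 \leq t \leq t^*,\, x > \alpha s(t)\}$. At $t = t^*$ the two free boundaries coincide and $V(t^*, \alpha s(t^*); c, \beta) = S(t^*; c, \beta) = s(t^*) = v(t^*, \alpha s(t^*))$, so $W := V - v$ is a nonnegative solution of $W_t = \tfrac12 W_{xx}$ on $\{0 < t \leq t^*,\, x > \alpha s(t)\}$ that vanishes at $(t^*, \alpha s(t^*))$. The strong maximum principle gives $W > 0$ at all earlier interior points, and Hopf's boundary-point lemma yields $W_x(t^*, \alpha s(t^*)) > 0$. Differentiating the definitions of $V$ and $v$ at their (coinciding) boundaries and using the Stefan condition together with $u, U = 0$ there, one has $V_x|_{x=\alpha S} = 2\gamma S'(t^*; c, \beta)$ and $v_x|_{x=\alpha s} = 2\gamma s'(t^*)$, so $W_x > 0$ translates into $s'(t^*) < S'(t^*; c, \beta)$.

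\textbf{Persistence after $t^*$ via the $m$-comparison.} To promote the local sign reversal to all of $(t^*, T)$, I invoke Lemma \ref{T.3.1} with the time origin shifted to $t^*$ and the same choice $(u_1, s_1) = (U(\cdot,\cdot;c,\beta), S(\cdot;c,\beta))$, $(u_2, s_2) = (u, s)$. Its first hypothesis ($s_2(t^*) \leq s_1(t^*)$ and $s_2'(t^*) < s_1'(t^*)$) is exactly the Hopf output. For hypothesis 2 I would derive $m_2(t^*, \cdot) \geq m_1(t^*, \cdot)$ directly from $V \geq v$ at $t^*$: since $s(t^*) = S(t^*; c, \beta)$, that inequality reduces to $\gamma g(x) + \int_{\alpha S(t^*;c,\beta)}^{x} g(y)\,dy \geq 0$ with $g := U(t^*, \cdot; c, \beta) - u(t^*, \cdot)$; for $G(x) := \int_{\alpha S(t^*;c,\beta)}^{x} g$ this reads $\gamma G' + G \geq 0$, i.e.\ $(e^{x/\gamma} G)' \geq 0$, which with $G(\alpha S(t^*;c,\beta)) = 0$ forces $G \geq 0$, and one further integration gives $m_2(t^*, \cdot) - m_1(t^*, \cdot) = \alpha \int_{\alpha S(t^*;c,\beta)}^{\cdot} G \geq 0$. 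Hypothesis 3 ($m_2 \geq 0$) follows from a preliminary application of Lemma \ref{L.3.0} comparing $u$ with the $c = 0$ self-similar profile, for which $\alpha U \leq \beta_\infty < 1$ and hence $m_1 \geq 0$; hypothesis 4 follows from $u(t, x) \to 0$ and $\alpha U(t, x; c, \beta) \to \beta_\infty < 1$ as $x \to \infty$. Lemma \ref{T.3.1} then yields $s(t) < S(t; c, \beta)$ for all $t \in (t^*, T)$, forcing $t^* = t_0$ and completing \eqref{2.5}.

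The most delicate step is the Hopf argument at $t^*$, since $(t^*, \alpha s(t^*))$ sits at the terminal time of the comparison domain rather than in its parabolic interior. One must use the $C^1$ regularity of $s(\cdot)$ near $t^*$ (available from the classical solution property in Lemma \ref{l.clascsol}, valid until the next blow-up) to construct an interior tangent ball at that corner and apply the boundary-point lemma in standard form; the remaining ingredients are routine.
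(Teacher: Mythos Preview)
Your proof is correct and follows essentially the same route as the paper's: identify the first crossing time, apply Lemma~\ref{L.2.1} with the ingredients from Lemma~\ref{L.3.1} up to that time, invoke Hopf's lemma to get $s' < S'$ there, convert $V\ge v$ into $m\ge M$ via the first-order ODE inequality $\gamma G'+G\ge 0$, and then apply Lemma~\ref{T.3.1} to propagate $s<S$ forward. The only cosmetic difference is that you argue directly (show $s<S$ on $(t^*,T)$, forcing $t^*=t_0$) whereas the paper argues by contradiction (assume a crossing $t_1<t_0$ and reach a contradiction with $s(t_0)=S(t_0;c,\beta)$); the substance is identical.
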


\begin{proof}
It follows from $c<0$ and $S(0;c,\beta)=\alpha^{-1}c<0$ that $S(t;c,\beta)<s(t)$ for $t$ sufficiently small. Suppose for contradiction that there is $t_1<t_0$ such that $S(t_1;c,\beta)=s(t_1)$ and $S(t;c,\beta)<s(t)$ for $t<t_1$. 

\quad Lemma \ref{L.3.0} yields $s(t)\leq \alpha^{-1}\beta\sqrt{2T}$ for all $t\in [0,T)$. Hence it follows from \eqref{i.3.3} that 
\[
V(t,\alpha s(t);c,\beta)\geq s(t)=v(t,\alpha s(t))\quad\text{ for } t\in [0,T).
\]
Then, using the assumption that $S(t;c,\beta)<s(t)$ for $t<t_1$, Lemma \ref{L.3.1} and Lemma \ref{L.2.1} (with $v_1=V,v_2=v$ where $V,v$ are given in \eqref{d.V}, \eqref{d.v} respectively) yield that
\beq\lb{2.8}
\begin{aligned}
\gamma U(t,x;c,\beta)+\int_{\alpha S(t)}^x U(t,y;c,\beta)dy&+S(t;c,\beta)=V(t,x;c,\beta)\\
&\geq v(t,x)=\gamma u(t,x)+\int_{\alpha s(t)}^x u(t,y)dy+s(t)
\end{aligned}
\eeq
for all $(t,x)\in\{x>\alpha s(t),t\in [0,t_1]\}$.
By the strong maximum principle (or Hopf's Lemma), we have $S'(t_1;c,\beta)>s'(t_1)$. 

Next consider 
\[
Z(x):=\int_{\alpha s(t_1)}^x\int_{\alpha s(t_1)}^y (U(t_1,z;c,\beta)-u(t_1,z))dz
\]
which, by \eqref{2.8} and the assumption that $S(t_1;c,\beta)=s(t_1)$, satisfies 
\[
\gamma Z''(x)+Z'(x)\geq 0\quad\text{ and }\quad Z(\alpha s(t_1))=Z'(\alpha s(t_1))=0.
\]
This implies that $Z(x)\geq 0$ for all $x>\alpha s(t_1)$. Therefore the definitions of $m,M$ yield
$m(t_1,\cdot)\geq M(t_1,\cdot;c,\beta)$. Clearly for all $t\in [0,T)$, by maximum principle and \eqref{2.6}, we have that $m(t,\cdot)>0$ for $x>\alpha s(t)$. In view of Lemma \ref{T.3.1} again, we obtain $S(t;c,\beta)>s(t)$ for all $t>t_1$ which contradicts with the assumption that $S(t_0;c,\beta)=s(t_0)$ with $t_0>t_1$. Hence $S(t;c,\beta)<s(t)$ for all $t<t_0$.
By going over the arguments in the above again, we also find that $S(t;c,\beta)>s(t)$ for all $t>t_0$.
\end{proof}

\subsection{Proofs of Theorem \ref{T.4.1} and Corollary \ref{T.3.3}}
\label{sc23}
We start by proving Theorem \ref{T.4.1}.
\begin{proof}[Proof of Theorem \ref{T.4.1}]
Suppose $(p,N)$ is a weak solution to \eqref{eq:MVHLinFP} in $ [0,T)$. Let $s(t)=\int_0^tN(r)dr$ for all $t\in [0,T)$, and $u,m$ be defined as in \eqref{2.2}, \eqref{2.10} respectively. Then $u_0=p_0$. Since $u$ solves the heat equation, $u(t,\cdot)$ is bounded in $L^\infty$ norm for any $t\in (0,T)$ (see e.g., \cite[Theorem 6.17]{Lieberman}). By the argument before \eqref{2.15}, we know that $u(t,x)\to 0$ as $x\to\infty$ locally uniformly in $t$ and so the same property holds for $p(t,x)$. Therefore there exists $C>0,\eps\in (0,T)$ such that
\[
\int_0^x (1-\alpha p(t,y))dy>0 \quad\text{ for all }x\geq C \text{ and }t\leq \eps.
\]
Due to \eqref{c.3.3} and $u\leq \tilde{u}$ with $\tilde{u}$ from Lemma \ref{l.clascsol}, we can assume without loss of generality that for the same $C,\eps>0$, $p(t,x)< \frac1\alpha$ for all $t\leq \eps$ and $x\leq \frac{1}{C}$. By further taking $\eps$ to be small enough, the weak formulation of solutions and the assumption on $p_0$ imply that
$\int_0^x (1-\alpha p(t,y))dy>0$ for $x\in [\frac{1}{C},C]$ and $t\leq \eps$. Then the assumptions \eqref{c.3.4}--\eqref{c.3.5} hold with $u_0$ replaced by $u(\eps,\cdot)$.  Hence, by starting at a small time $t=\eps$ instead of $t=0$, we can assume without loss of generality that $p_0$ is uniformly bounded in $L^\infty$, and the solution $(p,N)$ is a classical solution.

We take $\beta>0$ to be the smallest  constant satisfying the condition in Lemma \ref{L.3.1}. For any fixed $t_0\in (0,T)$, \eqref{2.4} implies $s(t_0)\leq S(t_0;0,\beta)$. Firstly if $s(t_0)=S(t_0;0,\beta)$, we claim that $s(t)=S(t;0,\beta)$ for all $t\leq t_0$. If this is not true, then there exist $t_1<t_0$ and $c<0$ sufficiently close to $0$ such that $s(t_1)<S(t_1;c,\beta)$. According to Lemma \ref{P.1}, we must have $s(t)<S(t;c,\beta)$ for all $t>t_1$ which is a contradiction because then $s(t_0)<S(t_0;c,\beta)<S(t_0;0,\beta)$. So in this case we obtain $s(t)=S(t;0,\beta)$ for all $t\leq t_0$. Also, in the case, for all $t\in (0,t_0)$ we have
\[
N(t)=s'(t)=S'(t;0,\beta)=\alpha^{-1}\beta(2t)^{-\frac{1}{2}}.
\]

Next we consider the case when $s(t_0)<S(t_0;0,\beta)$, which by definition is the same as $s(t_0)< \alpha^{-1}\beta\sqrt{2t_0}$. Thus Lemma \ref{P.1} yields that the curve $x=S(t;s(t_0)-\alpha^{-1}\beta\sqrt{2t_0},\beta)$ intersects with $x=s(t)$ at exactly one point $t=t_0$ for all $t\in [0,T)$ (notice here in terms of $S(t;c,\beta)$, $c$ takes the value of $s(t_0)-\alpha^{-1}\beta\sqrt{2t_0}\geq -\alpha^{-1}\beta\sqrt{2T}$, and so the assumption of Lemma \ref{P.1} is satisfied). Therefore
\[
N(t)=s'(t)\leq S'(t;s(t)-\alpha^{-1}\beta\sqrt{2t},\beta)=\alpha^{-1}\beta(2t)^{-\frac{1}{2}}.
\]

From the choice of $\beta$, and by varying $T$ (to be $t$) in the above arguments, we obtain for all $t\in [0,T)$,
\begin{align*}
N(t)\leq \alpha^{-1}\beta(2t)^{-\frac{1}{2}}\leq C\alpha^{-1}(1+\alpha^{-1}+(1+\alpha^\frac12)t^{-\frac{1}{2}}+(\ln t)^2) 
\end{align*}
for some $C$ depending only on $p_0$.
We can now conclude the proof by Lemma \ref{l.clascsol}.
\end{proof}

Now we proceed to proving Corollary \ref{T.3.3}.
\begin{proof}[Proof of Corollary \ref{T.3.3}]
To prove the first statement, in view of Theorem \ref{T.4.1}, it suffices to have
\beq\lb{4.2}
\int_0^x\left(1-\alpha \delta_{x_0}(y)\right)dy> 0
\eeq
for all $x>0$. Direct computation yields
that this is equivalent to 
${x}> \alpha 1_{x>x_0}$, which is the same as $\alpha< x_0$.

For the second statement, it follows from Proposition \ref{T.1.2} that if
\[
\alpha>2\int_0^\infty x \delta_{x_0}dx=2x_0,
\]
then any weak solution cannot exist for all time. 
In view of Lemma \ref{l.clascsol}, $(p_{x_0})_x\to \infty$ in finite time.
\end{proof}

\section{Solutions to the Fokker-Planck equation \eqref{eq:MVHLogFP}}
\label{sc3}

In this section, we consider the Fokker-Planck equation \eqref{eq:MVHLogFP}.
First of all, comparing to the equation \eqref{eq:MVHLinFP}, the equation is non-local. Since the total mass is decreasing after assuming $\lim_{x\to\infty}q(t,x)=0$ for all $t$ (there is no mass coming from $x=\infty$), the non-local form of $\lambda(t)$ yields a fast growth of it as $t$ increases. Hence it is more likely that $\lambda(t)$ grows to infinity in finite time comparing to $N(t)$ in \eqref{eq:MVHLinFP}.  

Recall the following result from \cite[Proposition 4.1]{NS19}, which proves existence and uniqueness of a generalized solution up to the first blow-up.

\begin{lemma}\lb{T.5.1}{\rm (\cite{NS19})}
Let $q_0(\cdot)\in W_2^1([0,\infty))$ with $q_0(0)=0$. For any $T>0$, there exist a time $t_{\reg}\in (0,T]$ and a function $\lambda\in L^2_{\loc}([0,t_{\reg}))$ such that for all $T'\in (0,t_{\reg})$ the unique solution to the Fokker-Planck equation \eqref{eq:MVHLogFP} in $W_2^{1,2}([0,T']\times [0,\infty))$ satisfies
\[
\lambda(t)=-\frac{1}{2}\frac{q_x(t,0)}{\int_0^\infty q(t,y)dy}\quad \text{ for almost every }t\in [0,T']
\]
and $\lim_{T'\uparrow t_{\reg}}\|\lambda\|_{L^2[0,T']}=\infty$ if $t_{\reg}<T$.
\end{lemma}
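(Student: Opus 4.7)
The plan is to establish Lemma \ref{T.5.1} by a Banach fixed-point argument on $L^2([0,T'])$ for sufficiently small $T'>0$, followed by a continuation argument producing the blow-up alternative at the maximal time $t_{\reg}$.

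Given a candidate function $\mu\in L^2([0,T'])$, I would first consider the linear (in $q$) parabolic initial-boundary value problem
\[
q_t=\tfrac{1}{2}q_{xx}-(\alpha\mu(t)+\beta)q_x,\quad q(t,0)=0,\quad q(0,\cdot)=q_0.
\]
Since $q_0\in W_2^1([0,\infty))$ with $q_0(0)=0$, standard linear parabolic theory (\cite{LSU68}, Ch.~IV, applied with a time-dependent but $x$-independent drift) produces a unique solution $q=q[\mu]\in W_2^{1,2}([0,T']\times[0,\infty))$ together with an estimate of the form $\|q\|_{W_2^{1,2}}\le K(T',\|\mu\|_{L^2})\|q_0\|_{W_2^1}$. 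The parabolic trace theorem then gives $q_x(\cdot,0)\in L^2([0,T'])$ controlled by $\|q\|_{W_2^{1,2}}$. Integrating the PDE over $x\in[0,\infty)$ and using decay at infinity yields the mass identity $\tfrac{d}{dt}\int_0^\infty q(t,\cdot)\,dx=-\tfrac{1}{2}q_x(t,0)$, which combined with $\int_0^\infty q_0>0$ and Cauchy--Schwarz in $t$ ensures $\int_0^\infty q(t,\cdot)\ge\tfrac{1}{2}\int_0^\infty q_0$ on $[0,T']$ for $T'$ small (depending on $q_0$ and $\|\mu\|_{L^2}$). This makes
\[
\Phi(\mu)(t):=-\tfrac{1}{2}\,\frac{q_x[\mu](t,0)}{\int_0^\infty q[\mu](t,y)\,dy}
\]
well-defined in $L^2([0,T'])$. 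For $\mu_1,\mu_2\in L^2([0,T'])$ the difference $w=q[\mu_1]-q[\mu_2]$ satisfies a linear parabolic equation with zero data and forcing $-\alpha(\mu_1-\mu_2)q_x[\mu_2]$; energy and trace estimates yield a bound $\|w\|_{W_2^{1,2}}+\|w_x(\cdot,0)\|_{L^2}\le C\,T'^{\gamma}\|\mu_1-\mu_2\|_{L^2}$ for some $\gamma>0$, together with parallel control on the difference of denominators. Shrinking $T'$, the map $\Phi$ becomes a contraction on a ball in $L^2([0,T'])$, and Banach's theorem yields the unique fixed point $\lambda$ and the associated local solution.

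Let $t_{\reg}\in(0,T]$ be the supremum of times on which such a solution exists with $\lambda\in L^2([0,\,\cdot\,))$. If $t_{\reg}<T$ and, for contradiction, $\limsup_{T'\uparrow t_{\reg}}\|\lambda\|_{L^2[0,T']}<\infty$, then linear theory extends $q$ to $W_2^{1,2}([0,t_{\reg}]\times[0,\infty))$ with trace $q(t_{\reg},\cdot)\in W_2^1([0,\infty))$ satisfying $q(t_{\reg},0)=0$; the finiteness of $\int_0^{t_{\reg}}|q_x(t,0)|\,dt$ (via $L^2$-in-$t$ control of $q_x(\cdot,0)$ and the mass identity) keeps $\int_0^\infty q(t_{\reg},\cdot)>0$. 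Applying the local-existence step with new initial datum $q(t_{\reg},\cdot)$ extends the solution past $t_{\reg}$, contradicting maximality and proving the asserted alternative $\lim_{T'\uparrow t_{\reg}}\|\lambda\|_{L^2[0,T']}=\infty$.

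The main obstacle is the non-local denominator $\int_0^\infty q(t,y)\,dy$. In contrast to equation \eqref{eq:MVHLinFP}, where $N(t)=\tfrac{1}{2}q_x(t,0)$ is controlled by a single trace, here one must simultaneously prevent the numerator from exploding and the denominator from vanishing. A large boundary flux $q_x(t,0)$ depletes mass through the identity above and thereby accelerates the growth of $\lambda$, a feedback loop that could a priori drive $\lambda$ out of $L^2$ in finite time. Establishing a uniform lower bound on the denominator on each compact sub-interval of $[0,t_{\reg})$, quantitatively in terms of $\|\lambda\|_{L^2}$ only, is the delicate technical point that both sustains the local contraction and renders the $L^2$ blow-up alternative meaningful.
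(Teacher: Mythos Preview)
The paper does not supply its own proof of this lemma: it is quoted from \cite[Proposition~4.1]{NS19} and used as a black box. Your sketch---a Banach fixed-point argument on $\lambda\in L^2([0,T'])$, linear parabolic $W_2^{1,2}$ estimates from \cite{LSU68} for the auxiliary problem, a trace bound on $q_x(\cdot,0)$, a mass-identity lower bound on the denominator, and a standard continuation/blow-up alternative---is precisely the strategy employed in \cite{NS19}, and the paper alludes to this when it contrasts ``the fixed-point method used in \cite{DI15, HLS19, NS19}'' with its own comparison and entropy techniques. So your approach is correct and coincides with the cited source.

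One point worth tightening if you intend a self-contained proof rather than a citation: the contraction factor $T'^{\gamma}$ you invoke does not follow from maximal $L^2$ regularity alone, which gives only $\|w\|_{W_2^{1,2}}\le C\|f\|_{L^2_{t,x}}$ with no small prefactor. The smallness is obtained by first restricting $\Phi$ to a ball of fixed radius $R$, using that $q[\mu_2]\in W_2^{1,2}\hookrightarrow C([0,T'];W_2^1)$ gives a uniform bound $\|q_x[\mu_2]\|_{L^\infty_tL^2_x}\le C_R$ on that ball, and then noting that the forcing $\alpha(\mu_1-\mu_2)\,q_x[\mu_2]$ has $L^2_{t,x}$ norm at most $C_R\|\mu_1-\mu_2\|_{L^2[0,T']}$; the factor $T'^{\gamma}$ then enters through interpolation (the difference $w$ has zero initial data) or through the H\"older bound $\|\mu_1-\mu_2\|_{L^1[0,T']}\le \sqrt{T'}\|\mu_1-\mu_2\|_{L^2}$ appearing in the denominator estimate. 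Your final paragraph correctly identifies the coupling between boundary flux and mass depletion as the mechanism that makes the $L^2$ blow-up alternative the natural one here.
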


The idea is to prove $t_{\reg} = \infty$ for suitable $q_0$, $\beta$ and $\alpha$.
Let us perform a transformation which turn the non-local equation \eqref{eq:MVHLogFP} into a local one. 
Denoting $\bar{q}(t):=\int_0^\infty q(t,x)dx$, we then have
$\bar{q}'(t)=\lambda(t)\bar{q}(t)$, and $r:={q}/{\bar{q}}$ satisfies
\beq\lb{5.2}
\left\{ \begin{array}{lcl}
r_t=\frac{1}{2}r_{xx}-(\alpha \lambda(t)+\beta)r_x -\lambda(t)r &&\text{ in }[0,\infty)^2,\\
\lambda(t)=-\frac{1}{2}{r_x(t,0)},\quad r(t,0)=0 &&\text{ on }[0,\infty),\\
r(0,x)={q_0(x)}/{\bar{q}(0,\cdot)} && \text{ on }[0,\infty).
\end{array}\right.
\eeq

Clearly the equation preserves mass. 
Now let us prove Proposition \ref{T.1.2} (2).
Call $r_0(x): = {q_0(x)}/{\bar{q}(0,\cdot)}$.
\begin{proof}[Proof of Proposition \ref{T.1.2} (2)]
Let $(r,\lambda)$ be from \eqref{5.2} and suppose the solution exists for $t\in [0,T']$. 
Since $q\in W_2^{1,2}([0,T']\times \bbR)$, then $r\in W_2^{1,2}([0,T']\times \bbR)$ and $\int_0^\infty e^{-\mu x}r(t,x)dx$ is H\"{o}lder continuous in time by Morrey's inequality.
It follows from the equation that
\beq\label{2.2'}
\begin{aligned}
    \int_0^\infty e^{-\mu x}r(t,x)dx&=\int_0^\infty e^{-\mu x} r_0(x)dx+\int_0^t\int_0^\infty ( 2^{-1}e^{-\mu x}  r_{xx}(\tau,x)\\
    &\qquad\qquad -(\alpha\lambda(\tau)+\beta) e^{-\mu x} r_x(\tau,x)-\lambda(\tau) e^{-\mu x}r(\tau,x))dxd\tau\\
    &=\int_0^\infty e^{-\mu x} r_0(x)dx+(2^{-1}\mu^2-\mu\beta)\int_0^t\int_0^\infty e^{-\mu x}r(\tau,x)dxd\tau\\
    &\qquad\qquad-\int_0^t\lambda (\tau)((1+\alpha\mu)\int_0^\infty e^{-\mu x}r(\tau,x)dx-1)d\tau.
\end{aligned}
\eeq
Since $\mu> 2\beta$, writing $M_\mu(t):=(1+\alpha\mu)\int_0^\infty e^{-\mu x}r(t,x)dx-1$, we get
\[
M_\mu(t)\geq M_\mu(0) -(1+\alpha\mu)\int_0^t\lambda(\tau)M_\mu(\tau)  d\tau. 
\]
It then follows from Gronwall's inequality and $M_\mu(0)\geq 0$ by \eqref{4.3} that for all $t\geq 0$,
\[
(1+\alpha\mu)\int_0^\infty e^{-\mu x}r(t,x)dx\geq 1.
\]
Hence \eqref{2.2'} yields
\[
 \int_0^\infty e^{-\mu x}r(t,x)dx\geq \int_0^\infty e^{-\mu x}r_0(x)dx+ \frac{1}{2}\mu(\mu-2\beta)\int_0^t\int_0^\infty e^{-\mu x}r(\tau,x)dxd\tau,
\]
which, by Gronwall's inequality, implies 
\[
\int_0^\infty e^{-\mu x}r(t,x)dx\geq e^{\frac{\mu(\mu-2\beta)}{2} t}\int_0^\infty e^{-\mu x}r_0(x)dx.
\]
However since the total mass of $r(t,\cdot)$ is always $1$. The solution cannot exist for $t\geq T$ where $T$ is such that
\[
e^{\frac{\mu(\mu-2\beta)}{2} T}\int_0^\infty e^{-\mu x}r_0(x)dx=1.
\]
Then Lemma \ref{T.5.1} yields that the $L^2$ norm of $\lambda(t)$ blows up at the time when the solution fails to exist.
\end{proof}

To prove Theorem \ref{T.3.2}, we show an $L^2$ bound on $\lambda(t)$ for general initial data with small $\alpha$ and large $\beta>0$.
We need the following technical lemma which is similar to \cite[Lemma 3.4]{CC15}.
\begin{lemma}\lb{L.3.2}
Define $\phi:[0,\infty)\to \bbR$ by $\phi(x)=e^{-\frac{1}{1-x^2}}$ if $x\in [0,1)$, and $\phi(x):=0$ otherwise. Then the following properties hold:
\begin{enumerate}
    \item $0\leq -\phi_x\leq \phi$ on $(0,\frac{1}{4})$.
    \item There exists $C>0$ such that $\phi_x^2+\phi_{xx}^2+\phi_{xxx}^2\leq C\phi$ on $(0,1)$.
    \item There exists $x_0\in (0,1)$ such that $\phi_{xx}(x_0)=0$ and $\phi_{xx}\leq 0$ on $(0,x_0).$
\end{enumerate}
\end{lemma}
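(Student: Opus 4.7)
The plan is to prove all three assertions by direct computation, exploiting the fact that on $(0,1)$ every derivative of $\phi$ factors as $\phi$ times a rational function whose only singularity is at $x=1$. First I would record $\phi_x = -\frac{2x}{(1-x^2)^2}\phi$ and, differentiating once more, $\phi_{xx} = \frac{6x^4 - 2}{(1-x^2)^4}\phi$; the expression for $\phi_{xxx}$ follows the same way. More generally, each derivative has the shape $P_k(x)(1-x^2)^{-m_k}\phi$ for a polynomial $P_k$ and an integer $m_k\ge 0$, and this structure drives all three statements.

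For part (1), the identity for $\phi_x$ gives $-\phi_x/\phi = 2x/(1-x^2)^2$, a non-negative function on $[0,1)$ whose derivative $(2+6x^2)/(1-x^2)^3$ is positive, so it is increasing; evaluating at $x = 1/4$ yields $\frac{1/2}{(15/16)^2} = \frac{128}{225} < 1$, whence $-\phi_x \le \phi$ throughout $(0, 1/4)$. For part (3), the polynomial factor $6x^4 - 2$ of $\phi_{xx}$ has a unique root in $(0,1)$, namely $x_0 = 3^{-1/4}$, and on $(0, x_0)$ one has $6x^4 - 2 < 0$ while $\phi > 0$, so $\phi_{xx} < 0$ there; the equality $\phi_{xx}(x_0) = 0$ is automatic.

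Part (2) is where the most care is needed, though the idea is standard: dividing the formula for $\phi_x^2$, $\phi_{xx}^2$, or $\phi_{xxx}^2$ by $\phi$ yields $R_k(x)\phi(x)$ for a rational function $R_k$ whose only singularity is a pole of finite order at $x = 1$. Since $\phi(x) = e^{-1/(1-x^2)}$ decays super-exponentially as $x \to 1^-$, the product $R_k(x)\phi(x)$ extends continuously to $0$ at $x = 1$; being continuous on the compact interval $[0,1]$, it is bounded. Taking $C$ as the maximum of the three resulting bounds yields the claim. The main obstacle, if one can call it that, is bookkeeping: one must check that each $R_k$ is regular on $[0,1)$, which is immediate from the explicit polynomial numerators, and this is the most tedious—though entirely routine—step of the argument.
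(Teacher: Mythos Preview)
Your proposal is correct and follows essentially the same approach as the paper: direct computation of the derivatives, using in particular the identity $-\phi_x = \frac{2x}{(1-x^2)^2}\phi$ for part (1). The paper is terser, deferring parts (2) and (3) to the analogous lemma in \cite{CC15}, while you actually carry out the computations (correctly identifying $\phi_{xx} = \frac{6x^4-2}{(1-x^2)^4}\phi$ and $x_0 = 3^{-1/4}$, and invoking the super-exponential decay of $\phi$ against the rational singularities for part (2)); but the underlying argument is the same.
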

\begin{proof}
These are results of trivial (but tedious) computations. For the first property, 
direct computation yields for $x\in (0,\frac{1}{4})$,
$
-\phi_x=\frac{2x}{(1-x^2)^2}\phi\leq \phi.
$
For the remaining claims, it follows line by line from the proof of \cite[Lemma 3.4]{CC15}.
\end{proof}

\begin{proof}[Proof of Theorem \ref{T.3.2}]
For any fixed $\kappa\in (0,\frac{1}{8}]$, define
$
\omega(x):=2\kappa\, x e^{-\sqrt{2\kappa}x}$
and then it is not hard to check that $\omega_x(0)=2\kappa$ and
\beq\label{4.7}
\frac{1}{2}\omega_{xx}+\sqrt{2\kappa}\,\omega_x +\kappa \omega=0.
\eeq
Since $\kappa\leq \frac{1}{8} $, $\omega(\cdot)$ is an increasing function in $(0,\frac{1}{\sqrt{2\kappa}})\supseteq (0,1)$. 
Note that $\omega(\cdot)$ can be viewed as a stationary solution to \eqref{5.2} with $\lambda(t)\equiv-\kappa$ and any $\alpha,\beta$ satisfying $\alpha\kappa-\beta=\sqrt{2\kappa}$.

Recall $r=\frac{q}{\bar{q}}$ and  
we call
\beq\lb{4.4}
h(t,x):=\frac{r(t,x)}{\omega(x)}\quad\text{ and }\quad {j}(t):=-\frac{\lambda(t)}{\kappa}.
\eeq
By L'Hopital's rule, we have $h(t,0)={j}(t)$.
Then let $\phi(x)$ be from Lemma \ref{L.3.2} and recall that it is supported on $(0,1)$. Below we will sometimes drop $t,x$ from the notations of $h(t,x),r(t,x),\omega(x),j(t),\lambda(t),\phi(x)$.
It follows from equation \eqref{5.2} that
\begin{align*}
\frac{d}{dt}\int_0^\infty  h^2\omega\phi\, dx&=\int_0^\infty h(r_{xx}-2(\alpha \lambda+\beta) r_x-2\lambda r)\phi\, dx\\
&=\int_0^\infty (-h_x r_x-2(\alpha \lambda+\beta) r_x -2\lambda r)\phi\, dx-\int_0^\infty hr_x\phi_x dx+2\lambda {j} \phi(0).
\end{align*}
Using that $r_x=h_x \omega +h\omega_x$ and $\phi(0)=e^{-1}$, we get the above
\begin{align*}
    &=-\int_0^\infty (  h_x^2 \omega+h h_x \omega_x )\phi\, dx-2(\alpha \lambda+\beta)\int_0^\infty h( h_x \omega+h\omega_x)\phi\,dx\\
    &\qquad\qquad-2\lambda\int_0^\infty h^2\omega\phi\,dx-\int_0^\infty hr_x\phi_x dx-{2}e^{-1}{j}^2\kappa\\
    &=-\int_0^\infty h_x^2 \omega\phi\, dx+\frac{1}{2}\int_0^\infty h^2\omega_{xx}\phi\,  dx+\frac{1}{2}\int_0^\infty h^2 \omega_x\phi_x dx- (\alpha \lambda+\beta)\int_0^\infty h^2\omega_x\phi\,dx\\
    &\qquad\qquad-2\lambda\int_0^\infty h^2\omega\phi\,dx+(\alpha \lambda+\beta)\int_0^\infty h^2\omega\phi_xdx-\int_0^\infty h(h_x\omega +h\omega_x)\phi_x dx-e^{-1}j^2\kappa\\
    &=-\int_0^\infty h_x^2 \omega\phi\, dx+\frac{1}{2}\int_0^\infty h^2\omega_{xx} \phi\, dx+\frac{1}{2}\int_0^\infty h^2 \omega\phi_{xx} dx- (\alpha \lambda+\beta)\int_0^\infty h^2\omega_x\phi\,dx\\
    &\qquad\qquad-2\lambda\int_0^\infty h^2\omega\phi\,dx+(\alpha \lambda+\beta)\int_0^\infty h^2\omega\phi_xdx-\frac{\lambda^2}{e\kappa}.
\end{align*}
Using equation \eqref{4.7}, we obtain
\beq\lb{4.5}
\begin{aligned}
\frac{d}{dt}\int_0^\infty h^2 \omega\phi\, dx&=-\int_0^\infty  h_x^2\omega\phi\, dx-(\alpha \lambda+\beta+\sqrt{2\kappa})\int_0^\infty h^2\omega_x\phi\,dx\\
-(2\lambda +\kappa)\int_0^\infty & h^2\omega\phi\, dx+(\alpha \lambda+\beta)\int_0^\infty h^2\omega\phi_xdx+\frac{1}{2}\int_0^\infty h^2 \omega\phi_{xx} dx-\frac{\lambda^2}{e\kappa }.
\end{aligned}
\eeq



Let us write $I(t):=\int_0^\infty h^2 \omega\phi\, dx$, and $J(t):=\int_0^\infty h_x^2 \omega\phi\,  dx$. By Young's inequality, we get
\begin{align*}
   -\alpha \lambda\int_0^\infty h^2\omega_x\phi\,dx\leq -2\alpha  \lambda\int_0^\infty |hh_x \omega |\phi\, dx \leq \frac{1}{2}J(t)+{2\alpha^2}{\lambda^2}I(t).
\end{align*}
Since $\kappa\leq \frac18$ and $\phi$ is supported in $(0,1)$, it is easy to check that $\omega\leq 2\omega_x$. We get
\[
-(\alpha \lambda+\beta+\sqrt{2\kappa})\int_0^\infty h^2\omega_x\phi\,dx\leq \frac{1}{2}J(t)+2\alpha^2 \lambda^2I(t)-\frac{\beta}{2}
I(t).
\]
The third term on the right-hand side of \eqref{4.5} satisfies
\[
-(2\lambda +\kappa)\int_0^\infty  h^2\omega\phi\, dx\leq -2\lambda\int_0^\infty  h^2\omega\phi\, dx\leq\frac{\lambda^2}{2e\kappa}+2e\kappa I(t)^2.
\]

Now we consider the fourth and fifth terms in \eqref{4.5}. We claim that: there exists a constant $C$ (independent of $\alpha,\beta$) such that for any $\eps>0$ we have
\beq\lb{4.10a}
(\alpha \lambda+\beta)\int_0^\infty h^2\omega\phi_xdx\leq C\eps(I(t)+J(t))+\frac{C\alpha^2 \lambda^2}{\eps}I(t)+\frac{C\alpha^2 \lambda^2}{\eps},
\eeq
and
\beq\lb{4.10b}
\frac{1}{2}\int_0^\infty h^2 \omega\phi_{xx} dx\leq C\eps(I(t)+J(t))+\frac{C}{\eps}.
\eeq

Assume the claim holds, and then it follows from \eqref{4.5} and the above estimates that there exists $C\geq 1$ independent of $\alpha,\beta$ such that for any $\eps\in (0,1)$,
\begin{align*}
I'(t)&\leq -\frac{1}{2}J(t)-\kappa I(t)+C\eps(I(t)+J(t))+\frac{C}{\eps}\alpha^2\lambda^2 (I(t)+1)+\frac{C}{\eps}-\frac{\lambda^2}{2e\kappa}\\
&\qquad + 2e\kappa I(t)^2-\frac{\beta}{2}
I(t),
\end{align*}
which, by taking $\eps:=\min\{\frac{1}{2C},\frac{\kappa}{ C}\}$, implies that for $C_1:=\frac{C}{\eps}\geq 1$ (which is independent of $\alpha,\beta$, and $t$), we have
\beq\lb{3.88}
I'(t)\leq \lambda^2\left(-\frac{1}{2e\kappa}+C_1\alpha^2(I(t)+1)\right)+C_1-\frac{\beta}{4} I(t)+ I(t)\left(2e\kappa I(t)-\frac\beta4\right).
\eeq
Set $M:=\max\{1,I(0)\}$ and pick 
\[
C_0:=\max\left\{4C_1,8e\kappa M,\sqrt{4e\kappa C_1(M+1)}\right\}\geq 1,
\]
and so $\beta\geq C_0$ and $\alpha\leq 1/C_0$ imply that
\[
C_1-\frac{\beta M}4\leq 0,\quad 2e\kappa M-\frac\beta4\leq 0 \quad\text{ and }\quad -\frac{1}{2e\kappa}+C_1\alpha^2( M+1)\leq -\frac{1}{4e\kappa}.
\]
Then it is easy to see from the differential inequality \eqref{3.88} that $I(t)$ cannot exceed $M$.
Furthermore, with these choices of $\alpha$ and $\beta$, the differential inequality \eqref{3.88} yields $I'(t)\leq -\frac{1}{4e\kappa}\lambda^2+C_1$. Hence it follows the global $L^2$ bound of $\lambda$:
\[
\int_0^t \lambda^2(s)ds\leq {4e\kappa}(C_1t+M).
\]
for all $t>0$ such that the solution is well-defined.
We conclude by applying Lemma \ref{T.5.1}.

Let us now prove the estimates \eqref{4.10a}--\eqref{4.10b}. The proof follows closely the arguments in \cite{CC15}. To show \eqref{4.10a}, since $\beta\geq 0,\lambda\leq 0$ and $\phi_x\leq 0$, it suffices to estimate $\alpha \lambda\int_0^\infty h^2\omega\phi_xdx$.
Consider two non-negative functions $\varphi_1,\varphi_2\in C^{\infty}(0,\infty)$ with $\varphi_1+\varphi_2=1$ such that $\varphi_1$ is monotone non-increasing with $\varphi_1=0$ on $(\frac{1}{4},\infty)$, and $\varphi_2$ is monotone non-decreasing with $\varphi_2=0$ on $(\frac{1}{8},\infty)$.
With this partition of unity, we can write $\int_0^\infty h^2\omega\phi_x dx=\int_0^\infty h^2\omega\phi_x\varphi_1 dx+\int_0^\infty h^2\omega\phi_x\varphi_2 dx$. 
Using $0\leq-\phi_x\leq \phi$ on the support of $\varphi_1$ by the first property given in Lemma \ref{L.3.2}, we find
\[
-\int_0^\infty h^2\omega\phi_x\varphi_1 dx\leq \int_0^\infty h^2\omega\phi\, dx=I(t).
\]
Hence for all $\eps\in(0,1)$, 
\[
\alpha \lambda\int_0^\infty h^2\omega\phi_x\varphi_1 dx\leq \left(\eps+\frac{\alpha^2\lambda^2}{\eps}\right)I(t).
\]

For the other part with cut-off function $\varphi_2$, since $\int_0^\infty p(t,x)dx=1$,
\begin{align*}
-\int_0^\infty h^2\omega\phi_x\varphi_2 dx&= -\int_0^\infty h \partial_x\left(\int_0^x p(t,y)dy\right)\phi_x\varphi_2dx\\
&\leq \int_0^\infty |h_x|\phi_x\varphi_2dx+\int_0^\infty h|(\phi_x\varphi_2)_x|dx.
\end{align*}
Young's inequality and the fact that $\phi$ is supported in $(0,1)$ yield
\begin{align*}
-\alpha \lambda\int_0^\infty |h_x|\phi_x\varphi_2dx&\leq \eps\int_0^\infty h_x^2(\phi_x\varphi_2)^2dx+\frac{1}{\eps}\alpha^2 \lambda^2\\
&\leq {C\eps} J(t)    +\frac{1}{\eps}\alpha^2 \lambda^2.
\end{align*}
In the second inequality, we used $(\phi_x\varphi_2)^2\leq C\phi$ and $\omega(x)\leq \omega(1)$ on $(0,1)$. Similarly 
\begin{align*}
    -\alpha \lambda\int_0^\infty h|(\phi_x\varphi_2)_x|dx\leq {C\eps}I(t)+\frac{1}{\eps}\alpha^2 \lambda^2.
\end{align*}
Putting these estimates together proves \eqref{4.10a}.
As for \eqref{4.10b}, due to the third property stated in Lemma \ref{L.3.2}, we have
\begin{align*}
    \int_0^\infty h^2\omega \phi_{xx}dx&= \int_{x_0}^1 h^2\omega \phi_{xx}dx=\int_{x_0}^1 h\partial_x\left(\int_0^x p(t,y)dy\right)\phi_{xx}dx\\
    &=-\int_{x_0}^1 h_x\left(\int_0^x p(t,y)dy\right)\phi_{xx}dx-\int_{x_0}^1 h\left(\int_0^x p(t,y)dy\right)\phi_{xxx}dx\\
    &\leq \int_{x_0}^1 |h_x||\phi_{xx}|dx+\int_{x_0}^1 h|\phi_{xxx}|dx.
\end{align*}
where in the third equality $\phi_{xx}(x_0)=0$ is applied, and in the inequality, we used $\int_0^\infty p(t,y)dy=1$. Due to the second property of Lemma \ref{L.3.2} and $\omega>0$ on $[x_0,1]$, we obtain
\[
\int_0^\infty h^2\omega \phi_{xx}dx\leq C\int_0^\infty |h_x| \sqrt{\omega\phi}\, dx+C\int_{0}^\infty h\sqrt{\omega\phi}\,dx\leq C\eps (I(t)+J(t))+\frac{C}{\eps}
\]
which finishes the proof of \eqref{4.10b}.
\end{proof}






\begin{acks}[Acknowledgments]
The authors would like to thank two anonymous referees, an Associate
Editor and the Editor for their constructive comments that improved the
quality of this paper.
\end{acks}

\begin{funding}
E. Bayraktar is partially supported by the National Science Foundation under grant DMS-2106556 and by the Susan M. Smith chair.

W. Tang gratefully acknowledges financial support through an NSF grant DMS-2113779 and through a start-up grant at Columbia University.

Y.P. Zhang acknowledges partial support by an AMS-Simons Travel Grant.
\end{funding}

\end{document}